\newtheorem{thm}{Theorem}[section]
\newtheorem{prop}[thm]{Proposition}
\theoremstyle{definition}
\newtheorem{defn}[thm]{Definition}
\theoremstyle{remark}
\numberwithin{equation}{section}
\numberwithin{algorithm}{section}
\DeclareMathOperator*{\argmin}{arg\,min}
\author{Myun-Seok Cheon}
\affil{Corporate Strategic Research, \\ ExxonMobil Research and Engineering}
\title{An Outer-approximation Guided Optimization Approach for Constrained Neural Network Inverse Problems}
\begin{document}
\maketitle
\newcommand{\N}{\mathcal{N}} 
\newcommand{\R}{\mathbb{R}} 
\newcommand{\K}{\mathcal{K}} 
\newcommand{\tr}{\mathcal{\gamma}} 
\newcommand{\x}{\bm{x}}

\begin{abstract}
	This paper discusses an outer-approximation guided optimization method for constrained neural network inverse problems with rectified linear units. The constrained neural network inverse problems refer to an optimization problem to find the best set of input values of a given trained neural network in order to produce a predefined desired output in presence of constraints on input values. This paper analyzes the characteristics of optimal solutions of neural network inverse problems with rectified activation units and proposes an outer-approximation algorithm by exploiting their characteristics. The proposed outer-approximation guided optimization comprises primal and dual phases. The primal phase incorporates neighbor curvatures with neighbor outer-approximations to expedite the process. The dual phase identifies and utilizes the structure of local convex regions to improve the convergence to a local optimal solution. At last, computation experiments demonstrate the superiority of the proposed algorithm compared to a projected gradient method.   
\end{abstract}

\section{Introduction}
Neural networks are the most essential and widely used ingredient of modern machine learning applications \cite{SCHMIDHUBER15}. The primary reason of the prevalent usages is the flexibility and capability of approximating any continuous function, known as the \emph{universal approximation theorem}  \cite{HORNIK90}. Deep neural networks have been demonstrating the strength of universal approximation capabilities in many machine learning applications including image processing, natural language processing and reinforcement learning problems. In the image classification problems, convolutional neural networks are commonly adopted to describe the spatial and temporal dependencies in images without any tailored feature engineering \cite{Krizhevsky12}. Recurrent neural networks are another example of the wide usage of analyzing temporal data \cite{Hochreiter97}. In reinforcement learning, various types of neural networks are employed to approximate intricate value and policy functions \cite{Silver17,Peters08}.

Neural network inverse problems refer to a class of optimization problems that find a right set of input parameters to achieve a desired output with a trained neural network. The trained neural networks can be viewed as a surrogate function that predicts outputs of an input. For example, in the material design context, neural networks can be trained to predict material properties or performances with material fingerprints such as molecular structures. The inverse problem is to find the right molecular structure to produce a certain material property. The input parameters are the main optimization variables for the inverse problems whereas the forward problems optimize weights and biases of a neural network with a large amount of input and output data to improve the prediction accuracy. 

\subsection{Constrained neural network inverse problems}
Let $f(\x): \mathbb{R}^n \rightarrow \mathbb{R}^m$ be a neural network function whose input and output dimensions are $n$ and $m$, respectively. Let $\bm{\hat{f}} \in \mathbb{R}^m$ be a desired output. Let $\mathcal{L}(\cdot,\cdot): \mathbb{R}^m \times \mathbb{R}^m \rightarrow \mathbb{R}$ be a function to measure the difference or loss between two vectors. Let $\mathcal{X}$ be the feasible space for input values. The constrained neural network inverse problem can be described as follows;
\begin{eqnarray}
(\text{P}) \qquad \min_{ x \in \mathcal{X} } & &  \mathcal{L}(f(x), \bm{\hat{f}}) \label{eq:nninv}
\end{eqnarray} 
The optimization problem is to find the input values ($\x$) that minimize the difference between the desired output ($\bm{\hat{f}}$) and the corresponding neural network output ($f(\x)$). For the sake of simplicity, let $g(\x) : \mathbb{R}^n \rightarrow \mathbb{R}$ denote $\mathcal{L}(f(\x), \hat{f}) \in \mathbb{R}$ and $\nabla g(\x) \in \mathbb{R}^n$ be the gradient of  $\mathcal{L}(f(\x), \bm{\hat{f}})$ at $\x$. 

\subsection{Potential applications}
Adversarial examples for a trained neural network are crafted to increase robustness of neural networks. The main idea is from the observation that trained neural networks often make mistakes on classification or prediction tasks for images that have a small perturbation of the original image, which are indifferent to human eyes \cite{Szegedy14,Linden89}. Such adversarial examples in the training set improve the robustness. Generating adversarial examples can be formulated as a neural network inverse problem. For a given image and the corresponding classification, the main decision variable of the problem is perturbations of the image and the main objective is to minimize the prediction accuracy to the corresponding classification. In this problem, the magnitude of perturbation can be controlled by constraints \cite{Fischetti18,Anderson19}.

The second example is any engineering and scientific analysis that consists of forward and inverse modeling. For example, material design and discovery activities can be described with two major parts; one is the forward (prediction) problem that predicts the property of a given material and the other is the inverse problem that finds a set of materials for a given desired property. With a large amount of data or simulation, deep neural networks can be trained to capture the core characteristics of the forward problem. The inverse problem with the trained neural network can be of the form problem (P). The problem optimizes the input, which is a choice of materials, to achieve a desired output such as a target property \cite{Chen20}. Similar structures of two phase approaches for engineering  problems have been discussed in several literatures \cite{Rezaee19,CORTES09}   

Another motivating example is physics-based inversion problems with a lower-dimensional representation. The physics-based inverse problems are to find the right set of parameters for a physics model that explain observations. For example, X-ray tomography is a nondestructive method to analyze the internal property and structure of an object with X-ray measurements. Another example is the subsurface analysis in the oil and gas industries, which utilize seismic and gravity data to describe the subsurface characterization. One of main challenges on such analysis is the ill-posedness of the inverse problem such that the number of model parameters to fit are several orders of magnitude larger than the number of independent observations. This phenomenon causes overfitting such that the resulting model explains the observation data extremely well but it is not a physically plausible solution. One idea to circumvent the issue is incorporating additional information through machine learning techniques into the inversion process. The main role of the machine learning techniques such as variants of variational autoencoders is to capture the main characteristics of reasonable physical realizations with many prior examples. Variational autoencoders consist of an encoder that maps or abstracts physical realizations to latent space variables and a decoder that projects latent variables to a physical realization. The decoder of a successfully trained variational autoencoder can produce a plausible physical realization with any value of the latent space along with a probabilistic measure. In the new approach, the resulting problem is of the form a neural network inverse problem that combines the physics based inverse problem along with the trained decoder. The problem optimizes the latent space variables of the decoder to minimize the difference between the given observation and the physics response of the corresponding physics model from the decoder \cite{omalley19}.  
     
\subsection{Prior solution methods}
Solution techniques for neural network inverse problems can be categorized into three classes; one group is methods that utilize the gradient information such as steepest descent algorithms, projected gradient method, etc. The second group is methods based on the forward function calls such as derivative-free methods and meta-heuristics. The last group is based on explicit mathematical formulations such as mixed integer nonlinear programming.

The gradient respect to input parameters can be computed with back-propagation of neural networks \cite{Linden89}. Unconstrained problems, i.e., no additional constraints for the input parameters, can be tackled with the gradient based methods such as steepest descent methods, quasi-Newton methods, etc. For constrained problems, the projected gradient method can be considered.
 
Meta-heuristics such as particle swarm optimization or genetic algorithms can be considered. The meta-heuristics are attractive because of inexpensive forward evaluation of trained neural networks. Compared to gradient based approaches, these methods avoid trapping into a local optimum \cite{Rezaee19}.
 
When a neural network has only rectified linear activation units, the inverse problem can be described as a mixed integer nonlinear programming problem \cite{Fischetti18,Anderson19}. Section \ref{sec:mp} will discuss a mathematical model.     

\subsection{Contributions}
This paper proposes an optimization algorithm in the context of a multi-start optimization framework for neural network inverse problems. The multi-start concept is employed to mitigate the local optimal issue of neural network inverse problems. In the framework, the initial optimization starting points are collected from the training set, e.g., $n$-closest data for a given target or $n$-clustering center points. There are two benefits of using the training set to select starting points. One is that it is easy to collect meaningful starting points with polynomial time examination. The other is that the solutions stay within the training region. Since neural networks are a surrogate function with a limited training data, if a solution is far from the training data region, the corresponding prediction might not be accurate. By using the trained data for starting solutions, the resulting solution might be retained within the comfort zone of neural networks. The multi-start optimization framework requires solving many optimization problems with various starting points. Therefore, it is important to have an efficient solution method.  

The contributions of this paper are twofold; One is that the paper analyzes the characteristics of local optimal solutions of neural network inverse problems with rectified activation units; The second contribution is the development of an efficient algorithm exploiting the characteristics and a demonstration of the superiority compared to a gradient based method through computational experiments.

The characteristics of neural network inverse problems and the types of local optimal solutions are discussed in Section \ref{sec:nnip}. In Section \ref{sec:ogo}, an outer approximation based algorithm is proposed. Section \ref{sec:comp} contains the computational results for the proposed method and a projected gradient method.  
 
\section{Characteristics of neural network inverse problems} \label{sec:nnip}
\subsection{Mathematical models} \label{sec:mp}
A neural network consists of multiple layers of interconnected neurons. Each neuron is a computing unit defined by weights, bias and an activation function. The weights and bias describe a linear relationship with outputs from connected neurons and the activation function, typically a nonlinear operator and applied after the linear computation, generates the final output. Equation \eqref{eq:relu} describes the computation in a neuron with a rectified linear activation function. 
\begin{equation}
t_j = \max \left (0, \sum_i w_{ij}t_i + b_j \right ), \label{eq:relu}
\end{equation}
where $t_i$ is the output of neuron $i$ and $w_{i,j}$ and $b_j$ are the weights from neuron $i$ to neuron $j$ and the bias term for neuron $j$, respectively. In Equation \eqref{eq:relu}, the rectified linear activation is described with the max operator.
Equation \eqref{eq:relu} can be described as a mixed integer linear system as follows;
\begin{align}
	&t_j - s_j = \sum_{i} w_{ij} t_i + b_j, && \forall j \in N^r, \label{eq:lps} \\
	&t_j \leq \mathbb{M} z_j,  && \forall j \in N^r, \\
	&s_j \leq \mathbb{M} (1 - z_j), && \forall j \in N^r, \\
	&t_j, s_j \geq 0, && \forall j \in N^r, \\	
	&z_j \in \{0,1\}, && \forall j \in N^r, \label{eq:lpe}
\end{align} 
where parameter $\mathbb{M}$ denotes a big-M, which is a large number and set $N^r$ denotes the neurons with a rectified linear activation function. The binary variable $z_j$ indicates whether neuron $j$ is active or not. When neuron $j$ is active such as $z_j = 1$, only variable $t_j$ can take a nonzero value. Otherwise, variable $s_j$ can take a nonzero value. Only a positive output value ($t_j$) is passed to  connected neurons. 

A neural network can be described as a mixed integer linear system as follows. Let $N$ be the all neurons and let $N^I$ and $N^O$ be the neurons in the input and output layers, respectively. 
\begin{align}
	& \eqref{eq:lps} - \eqref{eq:lpe} \\ 
	&t_j = x_j, && \forall j \in N^I, \label{eq:inp} \\
	&y_j = \sum_{i} w_{ij} t_i + b_j, && \forall j \in N^O, \label{eq:out}\\
	&y_j \in \R, && \forall j \in N^O
\end{align}
In the the mixed integer linear system, $x_j$'s are the parameter for the input layer in Constraint \eqref{eq:inp} and the result is the solution of variable $y_j$'s of the linear system for the output layer in Constraint \eqref{eq:out}. 

The formulation can be tightened with an extended formulation or projected cuts from an extended formulation \cite{Anderson19}.  

\subsection{Characteristics of local optimal solutions}
In this section, we discuss the properties of the neural network inverse problem with rectified linear activation units. Figure \ref{fig:lopt} shows a simple example of a neural network and the squared error loss with a given target. The neural network has 5 neurons and its output is a piece-wise linear, which is depicted in the right graph ((a)) of Figure \ref{fig:lopt}. Given a target output, the orange dotted line in (a) of Figure \ref{fig:lopt}, the left graph of Figure \ref{fig:lopt} shows the squared error loss respect to input $x$.  

\begin{figure}[h]
\begin{center}
\begin{tabular}{cc}
\includegraphics[width=0.45\textwidth]{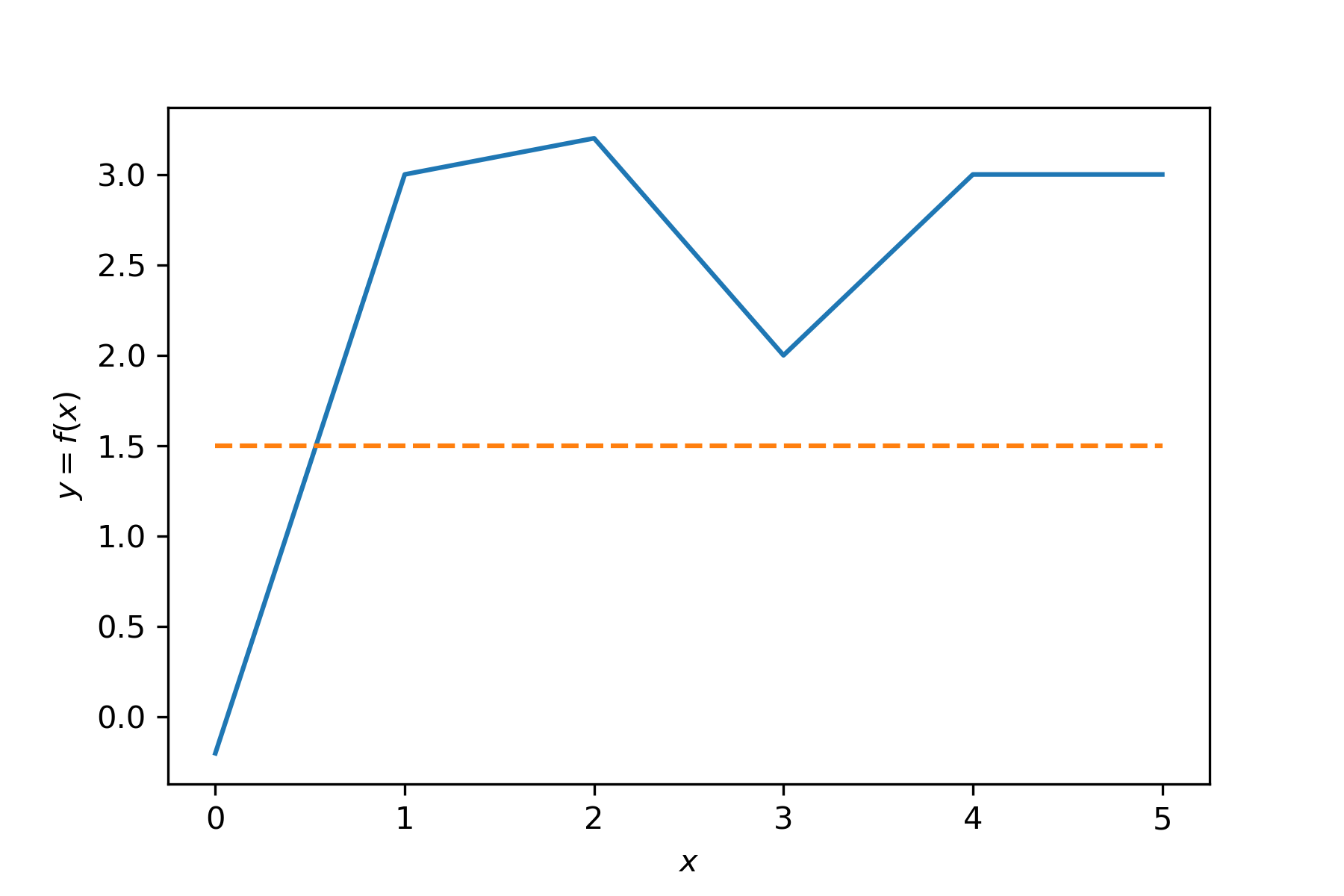}
&
\includegraphics[width=0.45\textwidth]{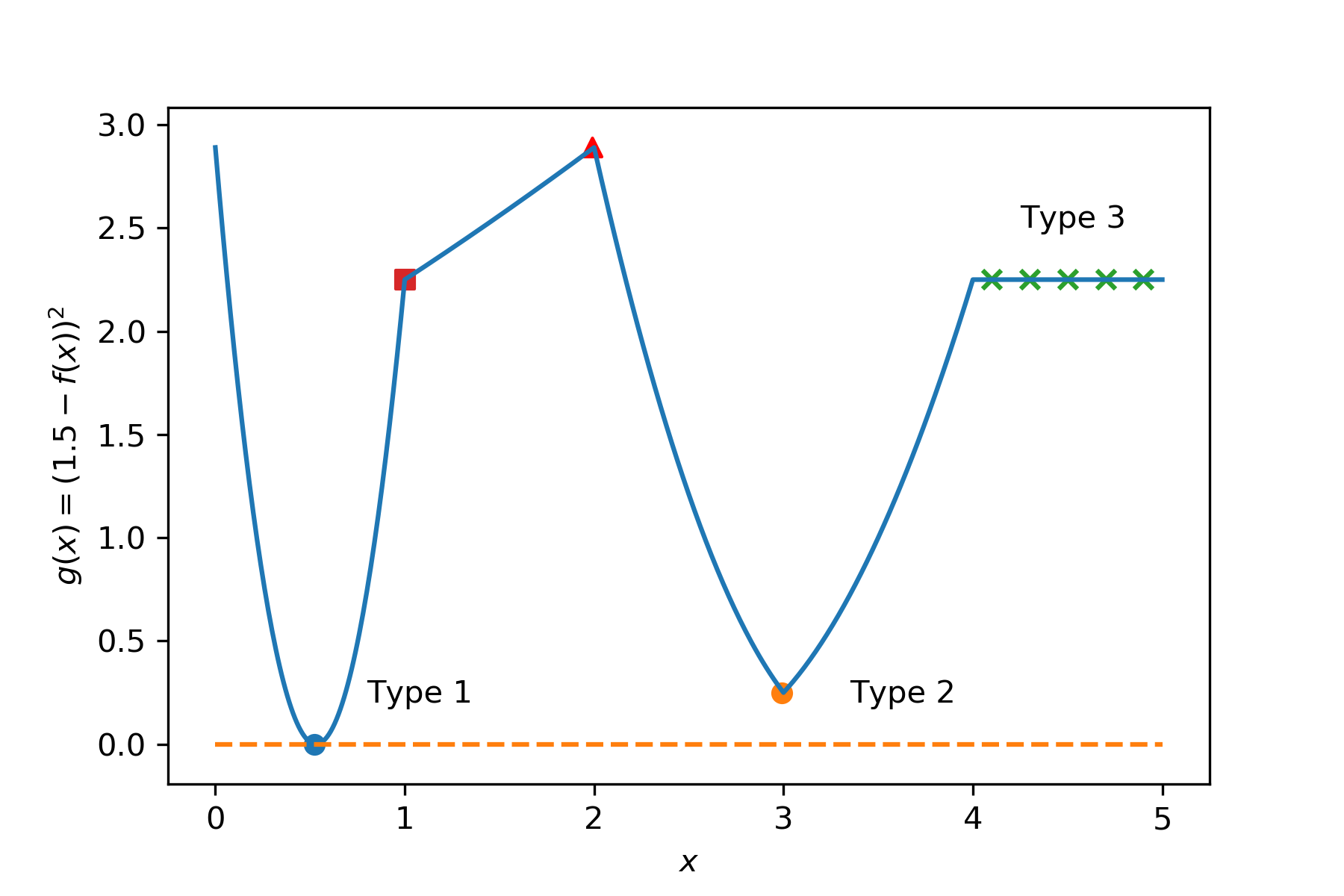} \\
(a) $f(\x)$ & (b) $g(\x)$
\end{tabular}
\end{center}
\caption{An illustrative example of several types of local optimal solutions. (a) a neural network describes a piecewise linear. The blue line represents the output of the neural network over input $x$. Each integer interval has a unique activation pattern. The orange dotted line represents the target for the loss function; (b) the squared error loss function for $f(x)$ with a given target. The figure illustrates various local optimal solutions. The orange dotted line represents the zero loss.} \label{fig:lopt}
\end{figure}

The first observation is that with a given activation such that a set of active neurons is predetermined, the resulting problem is convex. Let $\N(\x)$ be a set of active neurons, whose output is positive for input $\x$. Note that for a given solution $\x$, there can be more than one active set definition such that $\N_1(\x) \neq \N_2(\x)$.
\begin{prop}
If a neural network has only rectified linear activation units and a convex loss function such that $\mathcal{L}(\cdot, \hat{\bm{y}})$ is convex for a given target $\hat{\bm{y}}$, then, the neural network inverse problem  $g(\x)$ over $\x \in \{\x' | \mathcal{N}(\x') = \mathcal{N}(\hat{\x}) \}$ for a given set of activation $\N(\hat{\x})$ is convex.
\end{prop}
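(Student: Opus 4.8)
The plan is to show that once the activation pattern is frozen, the neural network output map $f$ becomes an \emph{affine} function of the input $\x$ on the region $\{\x' \mid \N(\x') = \N(\hat{\x})\}$, and then invoke composition-with-affine preservation of convexity together with the assumed convexity of $\mathcal{L}(\cdot,\hat{\bm{y}})$. First I would establish that the feasible region $\{\x' \mid \N(\x') = \N(\hat{\x})\}$ is itself a polyhedron (in particular convex): fixing which neurons are active fixes each binary $z_j$ in the formulation \eqref{eq:lps}--\eqref{eq:lpe}, and the system \eqref{eq:lps}--\eqref{eq:lpe}, \eqref{eq:inp}--\eqref{eq:out} with the $z_j$ held constant is a collection of linear equalities and inequalities in $(\x, t, s, y)$; projecting onto $\x$ keeps it polyhedral. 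This matters because convexity of $g$ is only being claimed \emph{on} that set, so the domain needs to be convex for the statement to be meaningful.

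Next I would argue the affinity of $f$ on that region by induction on the layers. For an input $\x$, the input-layer outputs $t_j = x_j$ are affine. Inductively, if the outputs $t_i$ of all neurons feeding into neuron $j$ are affine functions of $\x$ on the region, then the pre-activation $\sum_i w_{ij} t_i + b_j$ is affine; and because the activation status of $j$ is fixed on the region, the post-activation output is either identically $0$ (if $j$ is inactive, so $j \notin \N(\hat{\x})$) or equals the pre-activation value $\sum_i w_{ij} t_i + b_j$ itself (if $j$ is active) — in both cases affine. Carrying the induction through to the output layer via \eqref{eq:out} shows $y = f(\x)$ is affine on $\{\x' \mid \N(\x') = \N(\hat{\x})\}$, say $f(\x) = A\x + c$ for some matrix $A$ and vector $c$ determined by the weights, biases, and the frozen pattern.

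Finally, $g(\x) = \mathcal{L}(f(\x), \hat{\bm{y}}) = \mathcal{L}(A\x + c, \hat{\bm{y}})$ on the region. Since $\mathcal{L}(\cdot, \hat{\bm{y}})$ is convex by hypothesis and $\x \mapsto A\x + c$ is affine, the composition is convex; combined with the convexity of the polyhedral domain established above, this proves the claim. The main obstacle — really the only subtle point — is being careful about the ``active set is not unique'' caveat the authors flag just before the proposition: when a neuron's pre-activation is exactly $0$, it can be labelled active or inactive, so the region $\{\x' \mid \N(\x') = \N(\hat{\x})\}$ may be a lower-dimensional face or may be full-dimensional depending on the labelling, and one should check that the inductive step is valid regardless of which consistent labelling is chosen. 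Since on the relevant region the active neurons genuinely have nonnegative pre-activation (that nonnegativity is exactly the linear inequality encoding $\N(\x') = \N(\hat{\x})$) and the inactive ones have nonpositive pre-activation, $\max(0,\cdot)$ really does reduce to the affine branch claimed, so the argument goes through; this just needs to be stated cleanly rather than being a genuine difficulty.
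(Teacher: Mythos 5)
Your proposal is correct and follows essentially the same route as the paper: fixing the activation pattern turns the feasible region into a polyhedron and the network into an affine map, so the convex loss composed with an affine function is convex. The only difference is one of detail — you make explicit the layer-by-layer induction showing $f(\x) = A\x + c$ and the composition-with-affine argument, whereas the paper encodes the same fact implicitly through the linear system $\mathcal{Y}(\N(\hat{\x}))$ and states the conclusion directly.
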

\begin{proof}
With a set of fixed activation ($\N(\hat{\x})$), the feasible region of  $\{\x' | \mathcal{N}(\x') = \mathcal{N}(\hat{\x}) \}$ can be described with the following linear system by setting the binary variables of active neurons to one and the rest to zero.
\begin{eqnarray}
\mathcal{Y}(\N(\hat{\x})) &=& \left \{ \x \in \R^n \left | 
\begin{array}{ll}
	t_j = x_j, & \forall j \in N^I \\
	t_j = \sum_{i} w_{ij} t_i + b_j, & \forall j \in \N(\hat{\x}), \\
	0 \geq \sum_{i} w_{ij} t_i + b_j & \forall j \in N^r \setminus \N(\hat{\x}), \\ 
	t_j \geq 0,  & \forall j \in \N(\hat{\x}), \\
	t_j = 0,  & \forall j \in N^r \setminus \N(\hat{\x})
\end{array} \right . \right \} \label{eq:nnls} 
\end{eqnarray}
Since the loss function is convex and the feasible region is convex, the resulting optimization problem is convex.
\end{proof}

Any feasible solution can be categorized into two groups based on the following set definition. Let $\mathcal{B}({\x})$ be a set of neurons whose output after the linear computation, i.e., before the activation operation, is exactly zero for input $\x$ such as 
\begin{equation}
\mathcal{B}({\x}) = \left \{ j \in N^r \left | \sum_{i} w_{ij} \tilde{t}_i + b_j = 0 \right. \right \}, 
\end{equation}
where $\tilde{t}_i$ is the corresponding output of neuron $i$ for input $\x$.
One group is the solutions without any zero output neurons before the activation such that $\mathcal{B}(\x^*) = \emptyset$. The other group is at least one neuron with exactly zero output before the activation such that $\mathcal{B}(\x^*) \neq \emptyset$. Note that any solution is categorized into one of two categories such that $\{ x | \mathcal{B}(x) = \emptyset \land \mathcal{B}(x) \neq \emptyset \} = \emptyset$ and  $\{ x \in \mathcal{X} | \mathcal{B}(x) = \emptyset \lor \mathcal{B}(x) \neq \emptyset \} = \mathcal{X}$. In Figure \ref{fig:lopt}, any integer solution $x \in \{ 1, 2, 3, 4 \}$ has at least one zero output neuron. Set $\mathcal{B}(x)$ for all other real numbers $x \in \R \setminus \{ 1, 2, 3, 4 \}$ is empty.  

The neural network inverse problem with rectified linear units can be viewed as a union of many convex optimization problems with all possible activation permutations. 
\begin{defn} 
(Type 1) An optimal solution $\x^*$ is called a \textit{self-contained} local optimal solution if the solution $\x^*$ is optimal to the neural network inverse problem $g(\x)$ over $\x \in \mathcal{Y}(\N(\x^*))$ and no neuron has an output of exact zero before the corresponding activation such as $\mathcal{B}(\x^*) = \emptyset$.
\end{defn}
`Type 1' solution in Figure \ref{fig:lopt} is an example of a \textit{self-contained} local optimal solution. In this case, it is sufficient to prove the local optimality without considering its neighbor activation regions. If there exist zero output neurons such as $\sum_{i} w_{ij} \tilde{t}_i + b_j = 0$, there exists more than one activation pattern that contains solution $\x^*$. In this case, it is required to check if the solution is optimal respect to all possible permutations of  $\mathcal{N}(\x^*)$. Let $\mathcal{M}(\x^*)$ be a set of all feasible activation permutations for solution $\x^*$. Note that some permutations of activations can be infeasible. 
\begin{defn} 
(Type 2) An optimal solution $\x^*$ is called a \textit{boundary} local optimal solution if the solution $\x^*$ is optimal to the neural network inverse problem $g(\x)$ over $\x \in \{\x' | \mathcal{N}(\x') \in \mathcal{M}(\x^*) \}$ and the solution is associated with more than one activation pattern.
\end{defn}
In this case, there exists at least one neuron with exactly zero output such as $\mathcal{B}(\x^*) \neq \emptyset$. `Type 2' solution in Figure \ref{fig:lopt} falls into this category. When $x=3$, there are two possible activations - one is for the interval from 2 to 3 and the other is for the interval from 3 to 4. In both cases, the solution ($x=3$) is locally optimal. Conversely, consider the solution of $x=1$, which is a red square in Figure \ref{fig:lopt}. The solution $x=1$ is a local optimal respect to the convex region defined by the activation for $1 \leq x \leq 2$ while it is not local optimal for the convex region for the activation $0 \leq x \leq 1$.  
\begin{defn}
(Type 3) An optimal solution $\x^*$ is called a \textit{redundant} local optimal solution if an element $x_j$ of the solution $\x^*$ is insensitive to the local optimality such that solutions with any value $x_j$ within the feasible region $\x \in \{\x' | \mathcal{N}(\x') = \mathcal{N}(\x^*) \}$ are still locally optimal. 
\end{defn}
Note that the \textit{redundant} local optimal solutions can be categorized as a \textit{self-contained} or \textit{boundary} local optimal solution. `Type 3' solution in Figure \ref{fig:lopt} is an example of \textit{redundant} local optimal solutions. The solution is similar to a saddle point in nonlinear optimization problems. Note that the solution ($x=4$) at the boundary of `Type 3' solution in Figure \ref{fig:lopt} is not a local optimal solution. Its left neighbor has better objective values.   

\begin{thm}
	A solution ($\x^*$) is locally optimal to Problem (P) if and only if the solution ($\x^*$) is locally optimal for all neighbor activation subproblems such that
	\begin{equation}
		g(\x^*) = \min_{ \x \in \mathcal{X} \cap \mathcal{Y}(\N(x^*)) } g(\x) \qquad \forall \N(\x^*) \in \mathcal{M}(\x^*).
	\end{equation}  
\end{thm}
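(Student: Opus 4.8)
The plan is to reduce the statement to a purely local picture of the feasible set around $\x^*$. First I would fix notation: for a neuron $j\in N^r$ let $a_j(\x)=\sum_i w_{ij}t_i(\x)+b_j$ denote its pre-activation value, where $t_i(\x)$ is obtained by forward-propagating the input $\x$. Because the network is feed-forward and every layer is an affine map followed by the continuous map $\max(0,\cdot)$, each $a_j(\cdot)$ is a continuous (indeed piecewise affine) function on all of $\R^n$, and the activation pattern read off a forward pass, $\N=\{j\in N^r: a_j(\x)\ge 0\}$, always satisfies $\x\in\mathcal{Y}(\N)$. The crux of the proof is a covering lemma: there exists $\varepsilon>0$ with $B(\x^*,\varepsilon)\cap\mathcal{X}\subseteq\bigcup_{\N\in\mathcal{M}(\x^*)}\mathcal{Y}(\N)$. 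To establish it I would observe that $\mathcal{M}(\x^*)$ is exactly the collection of activation patterns $\N$ with $\x^*\in\mathcal{Y}(\N)$; inspecting \eqref{eq:nnls}, such membership forces $j\in\N$ whenever $a_j(\x^*)>0$ and $j\notin\N$ whenever $a_j(\x^*)<0$, so the patterns in $\mathcal{M}(\x^*)$ differ from one another only on the neurons of $\mathcal{B}(\x^*)$. Since $N^r$ is finite and each $a_j$ is continuous, I can choose $\varepsilon$ small enough that every $a_j$ with $a_j(\x^*)\neq 0$ keeps its sign on $B(\x^*,\varepsilon)$; then for $\x\in B(\x^*,\varepsilon)\cap\mathcal{X}$ the forward-pass pattern $\{j:a_j(\x)\ge 0\}$ agrees with $\x^*$ on all strictly-active and strictly-inactive neurons, hence lies in $\mathcal{M}(\x^*)$, which gives the lemma.

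With the covering lemma in hand the two implications are short. For $(\Leftarrow)$, assume $g(\x^*)=\min_{\x\in\mathcal{X}\cap\mathcal{Y}(\N(\x^*))}g(\x)$ for every $\N(\x^*)\in\mathcal{M}(\x^*)$; take $\varepsilon$ from the lemma, and for any $\x\in B(\x^*,\varepsilon)\cap\mathcal{X}$ pick $\N\in\mathcal{M}(\x^*)$ with $\x\in\mathcal{Y}(\N)$, so that $g(\x)\ge g(\x^*)$ by hypothesis; hence $\x^*$ is a local minimizer of (P). For $(\Rightarrow)$, if $\x^*$ is locally optimal for (P) then $\x^*$ is automatically a local minimizer of $g$ over each piece $\mathcal{X}\cap\mathcal{Y}(\N)$ with $\N\in\mathcal{M}(\x^*)$. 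I would then invoke the preceding Proposition (and its proof): on $\mathcal{Y}(\N)$ the map $f$ is affine in $\x$ and $\mathcal{L}(\cdot,\bm{\hat{f}})$ is convex, so $g$ is convex on $\mathcal{X}\cap\mathcal{Y}(\N)$, and this set is convex, being the polyhedron $\mathcal{Y}(\N)$ intersected with the feasible region $\mathcal{X}$. Since a local minimizer of a convex function over a convex set is global, $g(\x^*)=\min_{\x\in\mathcal{X}\cap\mathcal{Y}(\N)}g(\x)$, and as $\N\in\mathcal{M}(\x^*)$ was arbitrary the statement follows.

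I expect the covering lemma to be the only genuine obstacle. Two things must be nailed down there: that $\mathcal{M}(\x^*)$ consists of \emph{exactly} the activation patterns whose polyhedron $\mathcal{Y}(\N)$ contains $\x^*$ (so that ``all neighbor activation subproblems'' in the statement really ranges over every relevant piece), and that neurons which are strictly active or strictly inactive at $\x^*$ remain so throughout a small ball — this is precisely where the feed-forward structure is used, since it makes each $a_j$ an honest continuous function of $\x$ rather than an object that depends on the pattern being considered. One minor point worth flagging in the write-up: the right-hand side is phrased with a true $\min$ over $\mathcal{X}\cap\mathcal{Y}(\N)$, which presumes $\mathcal{X}$ (hence each $\mathcal{X}\cap\mathcal{Y}(\N)$) is convex, so that the per-piece problem is a convex program and local optimality upgrades to global. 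If $\mathcal{X}$ were only assumed closed, the $(\Leftarrow)$ direction is unchanged and $(\Rightarrow)$ still yields local optimality on each piece, which is all that is needed for the corresponding local-minimum version of the claim.
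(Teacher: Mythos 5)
Your proposal is correct, and it diverges from the paper's proof in a way worth noting. The ``only if'' direction is essentially identical in both: restrict local optimality of (P) to each piece $\mathcal{X}\cap\mathcal{Y}(\N)$, $\N\in\mathcal{M}(\x^*)$, and upgrade to global optimality on that piece using convexity of $g$ there (the paper does this via the secant inequality $g((1-\lambda)\x^*+\lambda\bar{\x})\le(1-\lambda)g(\x^*)+\lambda g(\bar{\x})$ and a contradiction; both arguments, yours and the paper's, silently need $\mathcal{X}\cap\mathcal{Y}(\N)$ convex, a hypothesis you are right to flag and the paper does not). The ``if'' direction is where you genuinely differ. The paper argues by contradiction: an improving point $\hat{\x}$ arbitrarily close to $\x^*$ must lie in a region $\mathcal{Y}(\N(\hat{\x}))$ not containing $\x^*$, and closedness of that polyhedron is invoked to conclude $\hat{\x}$ ``is not a neighbor'' --- a step that is stated very tersely (and with an apparent sign slip, $(1-\lambda)\x^*-\lambda\hat{\x}$ where a convex combination is presumably meant) and that implicitly relies on there being only finitely many such closed regions, so that those excluding $\x^*$ are uniformly bounded away from it. You instead prove the covering lemma directly: continuity of the forward-pass pre-activations $a_j$, finiteness of $N^r$, and the observation that the forward-pass pattern of any $\x$ certifies $\x\in\mathcal{Y}(\N)$ together give a ball $B(\x^*,\varepsilon)$ covered by $\bigcup_{\N\in\mathcal{M}(\x^*)}\mathcal{Y}(\N)$, after which the implication is immediate. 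The two mechanisms are logically close cousins (positive distance to a finite union of closed sets missing $\x^*$, versus sign-stability of finitely many continuous functions), but yours makes explicit the two facts the paper's sketch leaves unstated --- that every point belongs to some $\mathcal{Y}(\N)$ and that $\mathcal{M}(\x^*)$ comprises exactly the patterns whose polyhedra contain $\x^*$, differing only on $\mathcal{B}(\x^*)$ --- and is the more self-contained and checkable of the two.
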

\begin{proof}
	Suppose that a solution ($\x^*$) is locally optimal respect to all subproblems defined by its neighbor activation patterns and it is not locally optimal to Problem (P). Then, for any $\epsilon > 0$, there exists a solution $\hat{\x}$ such that $\| \x^* - \hat{\x} \| < \epsilon$, $g(\x^*) > g(\hat{\x})$ and $\x^* \notin \mathcal{Y}(\N(\hat{\x}))$. Since $\mathcal{Y}(\N(\hat{\x}))$ is a closed set (a linear system), there exists $0 < \lambda < 1$  such that $(1 - \lambda) \times \x^* - \lambda \times \hat{\x} \notin \mathcal{Y}(\N(\hat{\x}))$. That is, $\hat{\x}$ is not a neighbor. It contradicts.
	
	Suppose that there is an activation pattern $\N(x^*)$ whose corresponding neighbor convex region contains a better solution ($\bar{\x}$) such as  
	\begin{equation}
		g(\x^*) > g(\bar{\x}) = \min_{ \x \in \mathcal{X} \cap \mathcal{Y}(\N(x^*)) } g(\x).
	\end{equation}
	Since $g(\x)$ is convex, the following inequality holds  for $0 < \lambda \leq 1$; 
	\begin{equation}
		g(( 1 - \lambda ) \times \x^* + \lambda \times \bar{\x}) \leq ( 1 - \lambda ) \times g(\x^*) + \lambda \times g(\bar{\x}) < g(\x^*).
	\end{equation}
	It implies that there exists a neighbor solution $\bar{\x}$ that has a better objective value. Thus, $\x^*$ is not a local optimal solution.   
\end{proof}

\section{Outer approximation guided algorithm} \label{sec:ogo}
In this section, an outer-approximation guided algorithm is proposed for constrained neural network inverse problems. The proposed algorithm adapts core concepts from gradient based algorithms and outer approximation approaches \cite{Geoffrion70} while it exploits the characteristics of local optimal solutions. The proposed algorithm iteratively identifies a descent direction with outer approximation subproblems and determines the next solution with a step size.

The proposed algorithm comprises two phases - primal and dual phases; The primal phase is to improve the solution by incorporating local (neighbor) gradients and the dual phase is focusing on proving local optimality. The algorithm uses two outer approximation subproblems to find a descent direction for primal and dual phases and one outer approximation subproblem to prove local optimality.

\subsection{Outer approximation algorithms}
The outer approximation algorithms are widely used to solve large scale convex optimization and convex mixed integer programming problems\cite{Geoffrion70,Benders62,Duran86}. The main idea of the algorithms is to approximate nonlinear convex or complex linear systems with a set of hyperplanes. Consider the following optimization problem;
\begin{align}
\min_{ x \in \mathcal{X} } \quad & g(x),
\end{align}
where $g(x)$ is convex over $x \in \mathcal{X}$. Then, we can approximate the problem with hyperplanes as follows;
\begin{align}
\min_{x \in \mathcal{X}, v \in \mathbb{R}} \quad & v \\
\text{s.t.} \quad & v \geq g(x^k) + \nabla g(x^k)^T (x - x^k) & & k \in \K,  
\end{align}
where $x^k$ is feasible such that $x^k \in \mathcal{X}$ and $\nabla g(x^k)$ is the gradient at $x^k$. When $g(\cdot)$ is a certain linear system, the dual values can be used instead of gradients \cite{Benders62}. Similar approaches have been proposed and applied for large-scale nonsmooth convex optimization problems \cite{BenTal05}.  

The outer-approximation is valid only when function $g(x)$ is convex over the feasible set $\mathcal{X}$. One can easily show that the neural network functions with rectified linear activation functions are not convex. Even though the approximation is not valid, it can be used to determine a descent direction. The approximation will provide additional information from previous observations compared to simple first order methods. In addition, if the final solution is within a localized convex region, the corresponding approximation on the region is valid.

\subsection{Outer approximation for neural network inverse problems} \label{sec:noa}
In this section, we discuss an outer approximation model for the neural network inverse problems. Let $\K$ be an index set for solutions.  Let set $\mathcal{K}(\x')$ be an index subset for solutions ($\x^k$) in the union of convex feasible regions defined by activation patterns of solution $\x'$ such that
\begin{eqnarray}
\mathcal{K}(\x') &=& \{ k \in \mathcal{K}| \exists \N(\x^k) \in \mathcal{M}(\x') \}. 
\end{eqnarray}
Let $\x^*$ be the best solution among solutions $\x^k$'s such that $\x^* = \argmin_{\x^k,  k \in \K } g(\x^k)$.
Consider an outer approximation model for neural network inverse problems.  
\begin{align}
(\text{NOA}) \qquad \min_{\x \in \mathcal{X}, v \in \mathbb{R}} \quad & v \\
\text{s.t.} \quad & v \geq g(\x^{k}) + \nabla g(\x^{k})^T (\x - \x^{k}), & & {k} \in \K(\x^*). \label{eq:oa} 
\end{align} 
Note that $g(\x^*)$ is not differentiable when $\mathcal{B}(\x^*) \neq \emptyset$. Let $\nabla_{\N(\cdot)} g(\x^*)$ be the gradient of function $g(\cdot)$ at solution $\x^*$ in a specific activation pattern $\N(\cdot)$. With an activation pattern $\N(\cdot)$, the resulting problem such as minimizing $g(\x)$ over $\x \in \mathcal{Y}(\N(\cdot))$ is convex and differentiable for any feasible solution.
\begin{prop} \label{prop:lochk}
	Let $\x^*$ be a solution in the convex region defined by an activation pattern $\N(\x^*)$. The solution $\x^*$ is locally optimal for the convex region $\mathcal{Y}(\N(\x^*))$ if and only if the following inequality holds;
	\begin{equation}
	\nabla_{\N(\cdot)} g(\x^*)^T ( \x - \x^* ) \geq 0, \qquad \forall \x \in \mathcal{Y}(\N(\x^*)).	\label{eq:optchk}
	\end{equation}
\end{prop}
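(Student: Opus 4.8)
The plan is to recognize Proposition~\ref{prop:lochk} as the standard first-order optimality condition for a convex differentiable function over a convex set, specialized to the polyhedron $\mathcal{Y}(\N(\x^*))$. Since Proposition~2.1 (the first proposition in the excerpt) already establishes that $g$ restricted to $\mathcal{Y}(\N(\x^*))$ is convex, and since on this restricted domain a fixed activation pattern is in force so the max-operators are resolved and $g$ agrees with a composition of affine maps with the convex loss $\mathcal{L}(\cdot,\hat{\bm y})$ — hence is differentiable there with gradient $\nabla_{\N(\cdot)}g(\x^*)$ — the claim is exactly: for a convex differentiable $h$ on a convex set $C$, a point $\x^*\in C$ minimizes $h$ over $C$ iff $\nabla h(\x^*)^T(\x-\x^*)\ge 0$ for all $\x\in C$. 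I would prove this equivalence directly rather than cite it, to keep the paper self-contained.

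For the ``only if'' direction I would argue by contraposition: suppose some $\x\in\mathcal{Y}(\N(\x^*))$ satisfies $\nabla_{\N(\cdot)}g(\x^*)^T(\x-\x^*)<0$. Using convexity of $\mathcal{Y}(\N(\x^*))$, the segment $\x^*+\lambda(\x-\x^*)$ lies in the feasible region for all $\lambda\in[0,1]$, and the directional derivative of $g$ at $\x^*$ along $\x-\x^*$ equals $\nabla_{\N(\cdot)}g(\x^*)^T(\x-\x^*)<0$, so for sufficiently small $\lambda>0$ we get $g(\x^*+\lambda(\x-\x^*))<g(\x^*)$, contradicting local optimality over $\mathcal{Y}(\N(\x^*))$. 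For the ``if'' direction I would invoke convexity of $g$ on $\mathcal{Y}(\N(\x^*))$ (Proposition~2.1): the gradient inequality gives $g(\x)\ge g(\x^*)+\nabla_{\N(\cdot)}g(\x^*)^T(\x-\x^*)\ge g(\x^*)$ for every $\x\in\mathcal{Y}(\N(\x^*))$, so $\x^*$ is in fact a global minimizer over that region, a fortiori locally optimal.

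One subtlety worth addressing explicitly is the meaning of ``locally optimal for the convex region'': because $g$ is convex on $\mathcal{Y}(\N(\x^*))$, local optimality over this region is equivalent to global optimality over it, so there is no gap between the two readings; I would state this observation so the ``if'' direction's conclusion (global minimality) is seen to imply the stated ``local'' claim. A second point is that $\x^*$ must genuinely lie in $\mathcal{Y}(\N(\x^*))$ and the activation pattern must be feasible — this is given in the hypothesis (``$\x^*$ a solution in the convex region defined by $\N(\x^*)$''), and it guarantees $\nabla_{\N(\cdot)}g(\x^*)$ is well-defined via the fixed-pattern affine representation of the network.

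I do not anticipate a serious obstacle here; the main thing to be careful about is notational — keeping the subscript $\N(\cdot)$ gradient consistent with the fixed-activation affine form of the network and making sure the directional-derivative computation uses that form rather than the non-differentiable $g$. The one place a reader might want more detail is the claim that the one-sided directional derivative of $g$ at $\x^*$ in a feasible direction coincides with $\nabla_{\N(\cdot)}g(\x^*)^T(\x-\x^*)$; this is immediate because along a feasible segment the activation pattern $\N(\x^*)$ remains valid in a neighborhood within $\mathcal{Y}(\N(\x^*))$, so $g$ equals its differentiable fixed-pattern representative there.
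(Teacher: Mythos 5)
Your proposal is correct and follows essentially the same route as the paper's own proof: the ``only if'' direction via the absence of a feasible descent direction (which you make rigorous with the directional-derivative computation the paper leaves implicit), and the ``if'' direction via the first-order convexity inequality $g(\x)\ge g(\x^*)+\nabla_{\N(\cdot)}g(\x^*)^T(\x-\x^*)$, which the paper states as a contradiction and you state directly. Your added remarks on the equivalence of local and global optimality over the convex region and on the well-definedness of the fixed-pattern gradient are sensible clarifications but do not change the argument.
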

\begin{proof}
Let $\x^*$ be a local optimal solution within convex region $\mathcal{Y}(\N(\x^*))$. It implies that there is no feasible descent direction $\bm{d} = \alpha(\x - \x^*)$ for any feasible $\x \in \mathcal{Y}(\N(\x^*))$ and a scalar $\alpha > 0$. That is, the inequality \eqref{eq:optchk} is valid for all $\x \in \mathcal{Y}(\N(\x^*))$.

Conversely, suppose that the inequality \eqref{eq:optchk} is valid for all $\x \in \mathcal{Y}(\N(\x^*))$ and $\x^*$ is not local optimal. Then, there exists at least a solution $\hat{\x}$ such that $g(\x^*) > g(\hat{\x})$. Since the problem is convex, the following outer-approximation is always valid.
\begin{eqnarray*}
	& & g(\hat{\x}) \geq g(\x^*) + \nabla_{\N(\cdot)} g(\x^*)^T ( \hat{\x} - \x^* ), \\ 
	\Rightarrow & & 0 > g(\hat{\x}) - g(\x^*) \geq \nabla_{\N(\cdot)} g(\x^*)^T ( \hat{\x} - \x^* ).
\end{eqnarray*}
It contradicts that the inequality \eqref{eq:optchk} is valid. Therefore, $\x^*$ is local optimal.
\end{proof}

\begin{thm} \label{thm:opt}
Let $v^*$ be the optimal objective value of the outer approximation (NOA) with solutions $\x^k \in \mathcal{X}, k \in \mathcal{K}$ and $\x^*$ be the best known solution respect to function $g(\x)$ such that $\x^* = \argmin_{\x^k, k\in \mathcal{K}} g(\x^k)$. The solution $\x^*$ is local optimal to Problem (P) if the following conditions hold;
\begin{enumerate}
	\item $g(\x^*) = v^*$,
	\item $\nabla_{\N(\cdot)} g(\x^*)^T ( \x - \x^* ) \geq 0, \forall \x \in \mathcal{Y}(\N(\cdot)),   \N(\cdot) \in \mathcal{M}(\x^*)$.  
\end{enumerate}
\end{thm}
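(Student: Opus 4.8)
The plan is to reduce Theorem~\ref{thm:opt} to the local‑optimality characterization already available, namely the theorem at the end of Section~\ref{sec:nnip} stating that $\x^*$ is locally optimal to~(P) if and only if it minimizes $g$ over $\mathcal{X}\cap\mathcal{Y}(\N(\cdot))$ for every $\N(\cdot)\in\mathcal{M}(\x^*)$. I would argue by contradiction: suppose conditions~1 and~2 hold but $\x^*$ is not locally optimal to~(P). Then the characterization theorem yields one activation pattern $\N(\cdot)\in\mathcal{M}(\x^*)$ together with a point $\bar\x\in\mathcal{X}\cap\mathcal{Y}(\N(\cdot))$ satisfying $g(\bar\x)<g(\x^*)$, and the goal is to contradict this using condition~2.

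Fixing that pattern, I would observe that condition~2 specialized to $\N(\cdot)$ is exactly the inequality~\eqref{eq:optchk} appearing in Proposition~\ref{prop:lochk}; hence $\x^*$ is locally optimal for the convex region $\mathcal{Y}(\N(\cdot))$. Because minimizing the convex loss $g$ over the polyhedron $\mathcal{Y}(\N(\cdot))$ is a convex program (the convexity proposition of Section~\ref{sec:nnip}), this local optimality upgrades to global optimality, so $g(\x^*)\le g(\x)$ for every $\x\in\mathcal{Y}(\N(\cdot))$; since $\x^*$ is feasible and $\N(\cdot)\in\mathcal{M}(\x^*)$ we have $\x^*\in\mathcal{X}\cap\mathcal{Y}(\N(\cdot))\subseteq\mathcal{Y}(\N(\cdot))$, so in particular $g(\x^*)\le g(\bar\x)$, the desired contradiction. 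No offending pattern therefore exists, and the characterization theorem gives that $\x^*$ is locally optimal to~(P).

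Condition~1, $g(\x^*)=v^*$, enters as the primal certificate the algorithm actually evaluates: since $\x^*$ is the incumbent it is one of the sampled points $\x^k$ and lies in its own neighbor regions, so it contributes the cuts $v\ge g(\x^*)+\nabla_{\N(\cdot)}g(\x^*)^{T}(\x-\x^*)$ to~(NOA), and $g(\x^*)=v^*$ says no accumulated cut is improved over $\mathcal{X}$. I expect the delicate point to be the interface with the characterization theorem rather than any calculation: one must ensure that ``not locally optimal to~(P)'' really does localize to a single region $\mathcal{Y}(\N(\cdot))$ with $\N(\cdot)\in\mathcal{M}(\x^*)$ --- this relies on closedness of the fixed‑activation linear systems and on $\bigcup_{\N(\cdot)\in\mathcal{M}(\x^*)}\mathcal{Y}(\N(\cdot))$ covering a neighborhood of $\x^*$, both used inside that theorem --- and that condition~2 supplies a legitimate activation‑wise gradient $\nabla_{\N(\cdot)}g(\x^*)$ for \emph{every} pattern in $\mathcal{M}(\x^*)$, including the degenerate boundary ones with $\mathcal{B}(\x^*)\ne\emptyset$. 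It would also be worth checking whether condition~1 is logically needed or is already implied by condition~2 through the cuts generated at $\x^*$.
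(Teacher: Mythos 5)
Your argument is correct, but it follows a different route from the paper's. You reduce everything to the characterization theorem at the end of Section~\ref{sec:nnip} (local optimality for (P) iff per-region optimality over every $\N(\cdot)\in\mathcal{M}(\x^*)$) and then discharge each region using Proposition~\ref{prop:lochk} plus convexity, so that condition~2 alone carries the whole proof. The paper instead splits on whether $\mathcal{B}(\x^*)$ is empty: in the self-contained case ($\mathcal{B}(\x^*)=\emptyset$) it argues that the activation constraints of $\mathcal{Y}(\N(\x^*))$ are non-binding near $\x^*$, so the accumulated cuts in (NOA) form a valid local outer approximation and condition~1 ($g(\x^*)=v^*$) certifies optimality directly; only in the boundary case ($\mathcal{B}(\x^*)\neq\emptyset$) does it invoke Proposition~\ref{prop:lochk} and condition~2. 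Your closing question is well taken: as stated, condition~2 subsumes the self-contained case (there $\mathcal{M}(\x^*)$ is a singleton and $\mathcal{Y}(\N(\x^*))$ contains a neighborhood of $\x^*$), so condition~1 is not logically necessary for the conclusion. What the paper's case split buys is algorithmic: condition~1 is the cheap primal certificate the algorithm actually tests (comparing $v^*$ against $\bar{g}^*$), while condition~2 requires solving the linear programs (OC$(\N(\cdot))$) over possibly many activation patterns; your unified argument is cleaner logically but obscures why the algorithm checks the two conditions in that order. Both proofs lean on the same delicate facts---that $\bigcup_{\N(\cdot)\in\mathcal{M}(\x^*)}\mathcal{Y}(\N(\cdot))$ covers a neighborhood of $\x^*$ and that $\nabla_{\N(\cdot)}g(\x^*)$ is a legitimate one-sided gradient on each region---which you correctly identify as living inside the characterization theorem rather than needing new work here.
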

\begin{proof}
Consider two cases; one is $\mathcal{B}(\x^*) = \emptyset$ and the other is $\mathcal{B}(\x^*) \neq \emptyset$.
\begin{enumerate}
\item[i.] $\mathcal{B}(\x^*) = \emptyset$ \\
Consider the feasible set \eqref{eq:nnls}. Since $\mathcal{B}(\x^*) = \emptyset$, the following two constraints of \eqref{eq:nnls} are non-binding; 
\begin{align}
& 0 \geq \sum_{i} w_{ij} t_i + b_j, & & \forall j \in N^r \setminus \N(\hat{x}), \\ 
& 0 \leq t_j = \sum_{i} w_{ij} t_i + b_j,  & & \forall j \in \N(\hat{x}).
\end{align}
The rest of \eqref{eq:nnls} is redundant to function $g(\x)$. Therefore, the outer approximation \eqref{eq:oa} is a binding outer approximation for $g(\x)$ around solution $\x^*$. It means that $\x^*$ is local optimal and \emph{self-contained} local optimal.
\item[ii.] $\mathcal{B}(\x^*) \neq \emptyset$ \\
In this case, it is required to show that the current solution ($\x^*$) is also local optimal to its neighbor activations ($\mathcal{M}(\x^*)$). By Proposition \ref{prop:lochk}, the second condition ensures that the solution is local optimal for feasible regions defined by the corresponding activation patterns. 
\end{enumerate}
\end{proof}
For a given activation pattern $\N(\cdot)$, the following linear programming problem can check if the corresponding feasible region has a descent direction. 
\begin{align}
(\text{OC}(\N(\cdot))) \qquad \min_{\x \in \mathcal{X}} \quad & \nabla_{\N(\cdot)} g(\x^*)^T ( \x - \x^* ) \\
\text{s.t.} \quad & \x \in \mathcal{Y}(\N(\cdot))
\end{align} 
If the optimal objective is equal to zero, then it proves that there is no descent direction.

\subsection{Localized outer approximation subproblems} 
The proposed algorithm consists of primal and dual phases. The primal phase is designed to find a better solution quickly while the focus of the dual phase is to prove the local optimality of the best known solution. The dual phase employs the outer approximation model (NOA) discussed in Section \ref{sec:noa}. In order to identify the proper hyperplanes, the (NOA) outer approximation models require bookkeeping efforts of activation patterns for each solution. In order to alleviate the bookkeeping efforts in the early stage, the primal phase uses an outer approximation constructed based on the simple distance to the best known solution.

\subsubsection{Distance-localized outer approximation subproblem} 
Subproblem (DLOA$^k$) at iteration $k$ is defined as follows;
\begin{align}
(\text{DLOA}^k) \qquad \min_{\x \in \mathcal{X}, v \in \mathbb{R}} \quad & v \\
\text{s.t.} \quad & v \geq g(\x^{k'}) + \nabla g(\x^{k'})^T (\x - \x^{k'}) & & {k'} \in \K^* \cup \{k\},
\end{align}  
where set $\K^*$ is an index subset of hyperplanes that satisfy the following two conditions. One is the base solutions for the selected hyperplanes should be close to the best known solution such that the Euclidean distance from each base solution for hyperplanes to the best known solution should be less than $\gamma^c$.  The second condition is that the resulting hyperplane should not exclude the best feasible solution. The set $\K^*$ is defined as follows;
\begin{eqnarray}
\mathcal{K}^* &=& \left \{{k} \in \K \left | \begin{array}{l} g(\x^*) < g(\x^{k}) + \nabla g(\x^{k})^T (\x^* - \x^{k}), \\ \| \x^* - \x^{k} \| \leq \tr^c \end{array} \right. \right \}.
\end{eqnarray}
Note that since the algorithm drops previously generated outer approximation constraints, it can experience a \textit{stalling} behavior such that the algorithm could revisit the solution that is previously examined. Such behaviors are prevented with two mechanism. The first mechanism is to always include the latest hyperplane even if it cuts off the best known solution. The other mechanism is adjusting the step size. If the algorithm fails to find a better solution, it decreases the step size, which guarantees a different solution. Suppose that the hyperplane defined by the latest solution cuts off the best feasible solution. There are two possible outcomes. One is that the new approximation identifies a better solution. In this case, there is no stalling problem. The other case is that the new approximation fails to identify a new solution. In this case, the newly added hyperplanes will be removed with the validity check against the best known solution. It can lead to the same approximation model as before. However, since the algorithm fails to find a better solution, it will reduce the step size, which prevents the \emph{stalling} behavior.

\subsubsection{Region-localized outer approximation subproblem}

The following subproblems are employed in the dual phase to expedite proving the local optimality. Let $\N(\cdot) \in \mathcal{M}(\x^*)$ be a selected activation pattern. 
\begin{align}
(\text{RLOA}^k(\N(\cdot))) \qquad \min_{\x \in \mathcal{X}, v \in \mathbb{R}} \quad & v \\
\text{s.t.} \quad & v \geq g(\x^{k'}) + \nabla g(\x^{k'})^T (\x - \x^{k'}) & & \forall \x^{k'} \in \mathcal{Y}(\N(\cdot)), \label{eq:chp}\\
&  0 \geq { \bm{r}^{k'}}^T \x + d^{k'} & & \forall \x^{k'} \notin \mathcal{Y}(\N(\cdot)), \label{eq:fhp} 
\end{align}
where Constraint \eqref{eq:fhp} are feasibility cuts under consideration of the convex region $\mathcal{Y}(\N(\cdot))$. Since the set $\mathcal{Y}(\N(\cdot))$ is a linear system, a feasibility cut for solution $\x^{k'} \notin \mathcal{Y}(\N(\cdot))$ can be constructed based on the extreme ray of the following dual problem.
\begin{align}
\max_{r, \pi} \quad & \sum_{j \in N^I} x^{k'}_j r_j + \sum_{j \in  N \setminus N^I} b_j \pi_j \\
& \sum_{j \in N^I} x^{k'}_j r_j + \sum_{j \in  N \setminus N^I} b_j \pi_j  \leq 1, \\
& r_j - \sum_{k \in  N} w_{jk} \pi_k  = 0, & &\forall j \in N^I,  \\
& \pi_j - \sum_{k \in  N} w_{jk} \pi_k \leq 0, & &\forall j \in \N(\cdot),  \\
& r_j \in \mathbb{R}, & &\forall j \in N^I,  \\
& \pi_j \in \mathbb{R}, & &\forall j \in \N(\cdot), \\
& \pi_j \geq 0, & & \forall j \in N^r \setminus \N(\cdot), \\
& \pi_j = 0, & & \forall j \in N^O
\end{align}
When the neural network has many nodes, solving the aforementioned dual problem can be challenging and often numerically unstable. In order to overcome numerical and computational challenges, the proposed algorithm uses forward-propagation to populate Constraint \eqref{eq:fhp}. 

Assume that nodes closer to the input layer are assigned with lower index numbers than ones further away. Let $j^d$ be the lowest indexed node with activation discrepancy between the two solutions such that $j^d = \min ( j \in \N^r | ( j \in \N(\x^{k'}) \land j \notin \N(\cdot) ) \lor (j \notin \N(\x^{k'}) \land j \in \N(\cdot)) )$. Let $L$ be the set of layers. Let $n^l$ be the number of nodes in layer $l \in L$. Let $W^l \in \R^{n^l \times n^{l-1}}$ and $b^l \in \R^{n^l}$ be the weight matrix and the bias column vector for layer $l \in L$, respectively. Let $\bm{I}^{l} \in \R^{n^l \times n^l}$ be a matrix whose diagonal entries corresponding to active nodes in the activation pattern $\N(\cdot)$ are one, otherwise zero;
\begin{equation}
{I}^{l}_{i,j} = \left \{ \begin{array}{ll} 1, & \forall i = j, \nu_i \in \N(\cdot), \\ 0, & \text{otherwise}, \end{array} \right .
\end{equation} 
where ${I}^{l}_{i,j}$ denotes $j^{th}$ element in $i^{th}$ row of $\bm{I}^{l}$ and $\nu_i$ denotes the node index corresponding to $i^{th}$ row.

Suppose $\x$ is not in the same activation region $\N(\cdot)$ such that $\N(\x) \neq \N(\cdot)$. With a known activation pattern, the max operation can be described with matrix $\bm{I}^{l}$. The first layer of forward propagation can be computed as follows;

\begin{equation}
\bm{I}^{1}[ \bm{W}^1 \x + \bm{b}^1 ]
\end{equation}

Let $l^{j_d}$ be the layer for node $j_d$. Then, the forward propagation up to node $j_d$ can be described as follows;
\begin{eqnarray}
\widehat{\bm{W}} &=& \prod_{ l = l^{j_d} }^1 \bm{I}^{l} \bm{W}^l, \\
\hat{\bm{b}} &=& \bm{I}^{l^{j_d}}\bm{b}^{l^{j_d}} + \sum_{ l = l^{j_d} - 1 }^1 \left ( \prod_{ l' = l+1 }^{l^{j_d}} \bm{I}^{l'} \bm{W}^{l'} \right ) \bm{I}^{l} \bm{b}^{l}
\end{eqnarray}
Now, the feasibility constraint can be constructed as follows; suppose $i^d$ be $i^{th}$ row of the corresponding output to discrepancy node $j^d$ such that $\nu_{i^d} = j^d$. Let $\widehat{\bm{W}}_i$ and $\hat{\bm{b}}_i$ denote $i^{th}$ row of matrix $\widehat{\bm{W}}$ and $\hat{\bm{b}}$, respectively.
\begin{eqnarray}
\widehat{\bm{W}}_{i^d} \x + \hat{\bm{b}}_{i^d} &\leq& \bm{0} \quad \text{if } j^d \notin \N(\cdot), \\
\widehat{\bm{W}}_{i^d} \x + \hat{\bm{b}}_{i^d} &\geq& \bm{0} \quad \text{if } j^d \in \N(\cdot), 
\end{eqnarray}
By the construction of $j^d$, solution $\x^{k'}$ violates one of the constraints above.

\subsection{Outer approximation guided algorithm}
Algorithm \ref{alg:ogo} describes the outer approximation guided algorithm. The algorithm starts with an initial point $\x^0$ and hyperparameters in line \ref{alg:hp1} - \ref{alg:hp2}. Parameter $\gamma^s$, $\gamma^s_{\min}$, and $\gamma^s_{\max}$ denote the step size and its lower and upper bounds, respectively. Parameter $\gamma^c$ defines the neighborhood size of distance-localized outer approximation subproblems. Parameter $\rho^c$ and $\rho^e$ are decrease and increase ratios for the step size, respectively. Parameter $\epsilon$ is the local optimality gap for termination and parameter $N$ defines the maximum number of iterations. From line \ref{alg:ini1} to line \ref{alg:ini2}, it initializes the bookkeeping parameters; $\bar{x}^*$ and $\bar{g}^*$ denote the best known solution and the corresponding objective value, respectively. Parameter $k$ tracks the number of iteration and parameter dual-phase takes a boolean value to indicate whether the algorithm is in the dual phase or not.   

In each iteration, the algorithm evaluates the function value and the gradient for the current solution $\x^k$ and updates the corresponding constraint for subproblems (line \ref{alg:eval1} - \ref{alg:eval2}). In line \ref{alg:stup1} to \ref{alg:stup2}, if the algorithm finds a better solution than the current best known solution, it updates the best known solution and increases the step size with parameter $\rho^e$. When there is no improvement, it reduces the step size with parameter $\rho^c$. The step size is truncated by predetermined parameters $\gamma^s_{\min}$ and $\gamma^s_{\max}$. Line \ref{alg:it} updates the iteration count. If the step size is too small (Line \ref{alg:reach_min}), the  algorithm sets to the dual phase. When the algorithm is not in the dual phase, it solves the distance-localized outer approximation subproblem to find the next direction. If the objective of the approximation is close to the objective value of the best known solution, it reevaluates the approximation with solutions within the neighbor activation regions. If the resulting approximation is close to the best known objective within $\epsilon$, the algorithm activates the dual phase. In the dual phase, the algorithm terminates if the best known solution is $\epsilon-$optimal for all possible neighbor feasible region. If the algorithm finds a neighbor region with potential improvement, it generates a next direction with the solution of (RLOA$^k$(${\N}(\cdot)$)). In Line \ref{alg:stup}, the algorithm sets the next solution under consideration of the approximation solution $\hat{\x}^k$ and the step size parameter $\gamma^s$. 

\begin{algorithm}[h]
\begin{multicols}{2}
\begin{algorithmic}[1]
\State $\x^0 \in \mathcal{X}$
\State Define $\gamma^s >0, \gamma^s_{\min} > 0, \gamma^s_{\max} > 0$ \label{alg:hp1}
\State Define $\gamma^c > 0, 0 < \rho^c < 1, \rho^e > 1$ 
\State Define $\epsilon > 0, N > 0$	 \label{alg:hp2}	
\State $\bar{\x}^* \gets \x^0$, $\bar{g}^* \gets g(\x^0)$ \label{alg:ini1} 
\State $k \gets 0$
\State dual-phase $\gets false$ \label{alg:ini2}
\For{ $k \in \{0, \ldots, N\}$ } 
	 \State compute $g(\x^k)$ and $\nabla g(\x^k)$ \label{alg:eval1}
	 \State update constraints	\label{alg:eval2}
	 \If{ $g(\x^k) < \bar{g}^*$ }  \label{alg:stup1} 
	 	\State $\gamma^s \gets \min( \rho^e \times \gamma^s, \gamma^s_{\max} ) $	
	 	\State dual-phase $\gets false$
	 	\State $\bar{\x}^* \gets \x^k$, $\bar{g}^* \gets g(\x^k)$		 	
	 \Else
	 	\State $\gamma^s \gets  \max( \gamma^s_{\min}, \rho^c \times \gamma^s ) $	
	 \EndIf \label{alg:stup2}
	 \State $k \gets k + 1$ \label{alg:it}
	 \If { $\gamma^s = \gamma^s_{\min}$ } \label{alg:reach_min}
	 	\State dual-phase $\gets true$
	 \EndIf	 
	 \If { $\neg$ dual-phase }
	 	\State $\hat{\x}^k$, $v^k$ $\gets$ solve (DLOA$^k$)
	 	\If { $\bar{g}^* - v^k < \epsilon$ }
	 		\State $\hat{\x}^k$, $v^k$ $\gets$ solve (NOA)
	 		\If { $\bar{g}^* - v^k < \epsilon$ }
	 			\State dual-phase $\gets true$
			\EndIf
	 	\EndIf	
	 \EndIf
	 
	 \If { dual-phase }
	 	\State $\hat{\x}^k \gets \emptyset$
	 	\For{ $\N(\cdot) \in \mathcal{M}(\x^*)$ } \label{alg:enum}
	 		\State $\hat{\x}^k$, $v^k$ $\gets$ solve (RLOA$^k$(${\N}(\cdot)$)) \label{alg:rloa}
	 		\If { $\bar{g}^* - v^k \geq \epsilon$ }	 			
	 			\State \textbf{break}
	 		\EndIf
	 	
	 	\EndFor
	 	\If {$\hat{\x}^k = \emptyset$}
	 		\State \textbf{break} \label{alg:exit}
		\EndIf	 		
	 \EndIf	 
	 \State $ \x^k = \bar{\x}^* + \gamma^s \times ( \hat{\x}^k - \bar{\x}^* ) $ \label{alg:stup}
\EndFor
\end{algorithmic}
\end{multicols}
\caption{Outer approximation guided algorithm} \label{alg:ogo}
\end{algorithm}

Set $\mathcal{M}(\x^*)$ in Line \ref{alg:enum} can have exponentially many activation patterns. In the implementation, the algorithm records the previously examined activation patterns to avoid reexamining same regions, repeatably. The algorithm refreshes the record when it finds a new best solution. In the examination of neighbor patterns, the algorithm filters the activation patterns with the following inequality in a preliminary way.
\begin{equation}
\nabla_{\N(\cdot)} g(\x^*)^T ( \x^k - \x^* ) < 0, \x^k \in \mathcal{Y}(\N(\cdot)), \N(\cdot) \in \mathcal{M}(\x^*)
\label{eq:precheck} 
\end{equation}   
For each activation pattern $\N(\cdot) \in \mathcal{M}(\x^*)$, the algorithm checks whether the direction to all previous observations within the corresponding feasible region is a descent direction or not. If it is a descent direction, the algorithm solves RLOA$^k$(${\N}(\cdot)$) in Line \ref{alg:rloa}. Even if the algorithm fails to find an activation pattern that satisfies the condition \eqref{eq:precheck}, it does not mean that the algorithm identifies an $\epsilon-$optimal solution. The algorithm still needs to solve RLOA$^k$(${\N}(\cdot)$) in Line \ref{alg:rloa} for all possible activation patterns. The main purpose of the preliminary check is to quickly find a region with higher potential of improvement.

It is very challenging to find an exact solution at the boundary. Therefore, in the implementation, the boundary solutions are determined with a numerical tolerance $\tau > 0$ as follows;
\begin{equation}
\hat{\mathcal{B}}({\x}) = \left \{ j \in N^r \left | \left | \sum_{i} w_{ij} \tilde{t}_i + b_j \right | \leq \tau \right. \right \}. 
\end{equation}

\subsection{Convergence}  
\begin{thm}
If $\mathcal{X}$ is compact, Algorithm \ref{alg:ogo} will converge to an $\epsilon-$local optimal solution as $N \rightarrow \infty$.
\end{thm}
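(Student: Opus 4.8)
The plan is to treat separately the two ways a run can end --- a finite halt at Line~\ref{alg:exit}, or an indefinite run with $N\to\infty$ and no halt --- and to pin down the limiting best-known iterate in each case; throughout, I read ``$\epsilon$-local optimal'' (following Theorem~\ref{thm:opt} and the local-optimality characterization of Section~\ref{sec:nnip}) as a point $\x^*$ with $g(\x^*)\le\min_{\x\in\mathcal{X}\cap\mathcal{Y}(\N(\cdot))}g(\x)+\epsilon$ for every $\N(\cdot)\in\mathcal{M}(\x^*)$. I would first note that $\bar{g}^*$ is updated only on a strict decrease, so $\{\bar{g}^*\}$ is non-increasing; since $g$ (a convex loss composed with the continuous piecewise-linear network map) is continuous and $\mathcal{X}$ is compact, $g$ is bounded below on $\mathcal{X}$, hence $\bar{g}^*\downarrow\bar{g}^\infty$ for some finite $\bar{g}^\infty$, and the best-known iterates $\bar{\x}^*$, all lying in the compact set $\mathcal{X}$, have a subsequential limit $\x^\infty$ with $g(\x^\infty)=\bar{g}^\infty$. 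It then suffices to show that $\x^\infty$, or the iterate returned upon a halt, is $\epsilon$-local optimal.

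For the halt case, suppose Line~\ref{alg:exit} is reached. Then the inner loop over $\mathcal{M}(\x^*)$ ran to completion and every subproblem $\text{RLOA}^k(\N(\cdot))$ returned a value $v^k$ with $\bar{g}^*-v^k<\epsilon$. On each cell $\mathcal{Y}(\N(\cdot))$ the restriction of $g$ is convex (a convex loss of an affine map, with the activation fixed), the objective cuts \eqref{eq:chp} are valid lower supports of $g$ there, and the feasibility cuts \eqref{eq:fhp} --- whether read off the feasibility dual or built by the forward-propagation construction --- hold on all of $\mathcal{Y}(\N(\cdot))$; hence the feasible set of $\text{RLOA}^k(\N(\cdot))$ contains the epigraph of $g$ over $\mathcal{X}\cap\mathcal{Y}(\N(\cdot))$, so $v^k\le\min_{\x\in\mathcal{X}\cap\mathcal{Y}(\N(\cdot))}g(\x)$. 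Combining the two bounds, $\min_{\x\in\mathcal{X}\cap\mathcal{Y}(\N(\cdot))}g(\x)\ge v^k>\bar{g}^*-\epsilon$ for every neighbor pattern, and the characterization theorem then yields that $\bar{\x}^*$ is $\epsilon$-local optimal for (P).

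For an indefinite run I would argue by contradiction that $\x^\infty$ must be $\epsilon$-local optimal. If it were not, there would be $\N(\cdot)\in\mathcal{M}(\x^\infty)$ and $\x'\in\mathcal{X}\cap\mathcal{Y}(\N(\cdot))$ with $g(\x')<g(\x^\infty)-\epsilon=\bar{g}^\infty-\epsilon$, and since $\x^\infty\in\mathcal{Y}(\N(\cdot))$ and $g$ is convex on that cell, $g$ strictly decreases along $[\x^\infty,\x']$. The step logic drives $\gamma^s$ down to $\gamma^s_{\min}$ whenever no improvement is made, switching the dual phase on; there the run repeatedly enumerates the finite set $\mathcal{M}(\x^*)$, so each cell that contains the current best iterate is revisited and its $\text{RLOA}$ subproblem resolved again and again while $\bar{\x}^*$ sits near $\x^\infty$. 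A Kelley-type cutting-plane argument on a fixed cell --- using compactness of $\mathcal{X}\cap\mathcal{Y}(\N(\cdot))$, continuity of $g$, and the fact that each visit appends the support of $g$ at the newly evaluated point, plus a cut \eqref{eq:fhp} whenever the relaxation optimizer falls outside the cell --- drives $v^k$ up to $\min_{\x\in\mathcal{X}\cap\mathcal{Y}(\N(\cdot))}g(\x)$. If that minimum is $\ge\bar{g}^*-\epsilon$, the cell is eventually cleared; otherwise, because $\bar{g}^*\ge\bar{g}^\infty$ always, one would show that for large $k$ the relaxation optimizer $\hat{\x}^k$ lies in $\mathcal{Y}(\N(\cdot))$ with $\bar{g}^*-g(\hat{\x}^k)>\epsilon/2$, and then, since the current best iterate also lies in $\mathcal{Y}(\N(\cdot))$, convexity on that cell gives $g\bigl(\bar{\x}^*+\gamma^s_{\min}(\hat{\x}^k-\bar{\x}^*)\bigr)\le\bar{g}^*-\gamma^s_{\min}\bigl(\bar{g}^*-g(\hat{\x}^k)\bigr)<\bar{g}^*-\tfrac{1}{2}\gamma^s_{\min}\epsilon$, a strict improvement by at least the fixed amount $\tfrac{1}{2}\gamma^s_{\min}\epsilon$, which cannot recur infinitely often with $g$ bounded below. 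Hence either every neighbor cell gets cleared and the run halts (contradicting its being indefinite), or $\x^\infty$ already meets the $\epsilon$ estimate in each cell of $\mathcal{M}(\x^\infty)$; either way $\x^\infty$ is $\epsilon$-local optimal, and letting $N\to\infty$ completes the proof.

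The main obstacle is making this last step rigorous in the presence of the algorithm's bookkeeping: $\text{DLOA}$ hyperplanes are discarded and the record of examined activation patterns is flushed on every improvement of $\bar{\x}^*$, so one must verify these resets do not spoil the monotone tightening the Kelley argument relies on --- i.e.\ that once $\bar{g}^*$ stops dropping by macroscopic amounts the dual phase really does keep accumulating cuts in each not-yet-cleared neighbor cell. A second subtlety is the upper-semicontinuity gap between $\mathcal{M}(\bar{\x}^*)$ at the iterates and $\mathcal{M}(\x^\infty)$ at the limit: a limit point can lie on strictly more activation boundaries than nearby iterates, so one must argue a descent direction available at $\x^\infty$ in a cell not touched by any $\bar{\x}^*$ is nonetheless exposed through a cell that is --- here the step floor $\gamma^s_{\min}$ and the finiteness of $\mathcal{M}(\cdot)$ are the levers that keep the analysis tractable. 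One should also check that the objective cuts \eqref{eq:chp} are formed with the cell-specific subgradient $\nabla_{\N(\cdot)}g$ at boundary base points, so that they are genuine supports of $g$ restricted to $\mathcal{Y}(\N(\cdot))$.
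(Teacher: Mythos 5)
Your plan is built on the same engine the paper itself invokes---Kelley-type cutting-plane convergence on each convex activation cell, plus convexity of the loss restricted to a cell and compactness of $\mathcal{X}$---but the paper's own proof is only two sentences long: it asserts that the algorithm checks optimality with hyperplanes restricted to the convex region of the best known solution and then cites Kelley's method. What you do differently is supply the structure that citation leaves implicit: you split the run into a finite halt at Line~\ref{alg:exit} versus an indefinite run; for the halt case you prove that each $\text{RLOA}^k(\N(\cdot))$ is a valid relaxation of $\min_{\x\in\mathcal{X}\cap\mathcal{Y}(\N(\cdot))}g(\x)$, so that $v^k$ is a genuine lower bound and the test $\bar{g}^*-v^k<\epsilon$ certifies the $\epsilon$ estimate cell by cell via the characterization theorem of Section~\ref{sec:nnip}; and for the indefinite case you give a sufficient-decrease argument of size $\tfrac{1}{2}\gamma^s_{\min}\epsilon$ that caps the number of macroscopic improvements. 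This buys a complete and honest account at the cost of length, and the obstacles you flag are real and unaddressed by the paper: the discarding of DLOA hyperplanes and the flushing of the examined-pattern record on every improvement, the fact that cuts are generated at the damped point $\bar{\x}^*+\gamma^s_{\min}(\hat{\x}^k-\bar{\x}^*)$ rather than at $\hat{\x}^k$ itself (which perturbs the standard Kelley recursion), the need to use the cell-specific gradient $\nabla_{\N(\cdot)}g$ in \eqref{eq:chp} at boundary base points, and the upper-semicontinuity gap between $\mathcal{M}(\bar{\x}^*)$ and $\mathcal{M}(\x^\infty)$. One point to tighten in your indefinite-run branch: $\bar{g}^*-v^k\ge\epsilon$ bounds the relaxation value, not $g(\hat{\x}^k)$, so your claim that eventually $\bar{g}^*-g(\hat{\x}^k)>\epsilon/2$ genuinely requires the accumulated-cut limit you sketch and cannot be read off a single solve; as long as you make that Kelley step explicit, the argument closes.
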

\begin{proof}
By the design of the algorithm, it only checks the optimality condition with hyperplanes within the corresponding convex region respect to the best known solution. Since $\mathcal{L}(\cdot, \hat{f})$ is convex and $\mathcal{X}$ is compact, the algorithm converges to an $\epsilon-$local optimal\cite{Kelley60}.  
\end{proof}
Note that the feasible set $\mathcal{X}$ does not need to be convex in order to prove the local optimality \cite{Geoffrion72,Eaves71}. The current representation of the algorithm retains all prior hyperplanes. The algorithm can be improved by deleting prior hyperplanes \cite{Hogan73}. 

Checking all possible activation patterns in $\mathcal{M}(\x^*)$ guarantees an $\epsilon-$local optimal. However, Set $\mathcal{M}(\x^*)$ can have exponentially many possible patterns, which leads to computational challenges. Alternatively, the algorithm can examine the previously visited patterns and add a termination check at Line \ref{alg:exit} by solving the outer-approximation (NOA). If the bound from the approximation is close to the best known objective, then the algorithm terminates. Otherwise, the algorithm continues with the solution of approximation (NOA). While this approach would reduce the computational challenges of examining on all possible patterns, the approach does not guarantee a local optimality in some cases. For example, the boundary solution $x=4$ in Figure \ref{fig:lopt} is the case if all previous observations are strictly greater than 4. 

\subsection{Gradient Projection Methods}
The gradient projection methods can solve a class of constrained neural network inverse problems. The gradient projection methods are an iterative method that identifies a descent direction with gradients and ensures the feasibility through projection operations \cite{Bertsekas99}. An iteration of gradient projection methods can be described as follows;
\begin{eqnarray}
x^{k+1} &=& x^k + \alpha^k ( \bar{x}^k - x^k ), \label{eq:pg_step}\\
\bar{x}^k &=& [ x^k - s^k \nabla g(x^k) ]^+_\mathcal{X}, \label{eq:pg_target}
\end{eqnarray}
where $x^k$ is the solution at iteration $k$, $\nabla g(x^k)$ is the gradient of $x^k$, $[\cdot]^+_\mathcal{X}$ denotes projection on the set $\mathcal{X}$, and parameters $\alpha^k$ and $s^k$ are a step size and a positive scalar, respectively. Variable $\bar{x}^k$ is the projected solution of $x^k - s^k \nabla g(x^k)$ onto the set $\mathcal{X}$. For a given solution $x^k$, Equation \eqref{eq:pg_target} finds a descent direction $x^k - s^k \nabla g(x^k)$ and if the resulting solution is outside of feasible region $\mathcal{X}$, it projects the solution onto the feasible region $\mathcal{X}$. Similar to gradient based algorithms, Equation \eqref{eq:pg_step} uses a step size to update the solution. 

The projection operation can be computationally expensive depending on the characteristics of feasible region $\mathcal{X}$. If the feasible region is simple bound constraints, the projection operation is same as rounding operation. For other cases, the projection operation can be described as a quadratic programming problem, which  often requires computational efforts. 

\section{Computational experiments} \label{sec:comp}
This section discusses the computational results against two sets of neural network inverse problem instances; One set is based on a material design problem. The other instances are randomly generated. 

All algorithms are implemented in Python 3.6.5 along with Tensorflow 1.10.0. The linear programming problems are implemented in PuLP 1.6.8\footnote{LP modeler written in Python {https://github.com/coin-or/pulp}}. The main linear programming solver of the implementation is CbC 2.9.0\footnote{https://projects.coin-or.org/Cbc}, which is the default linear solver for PuLP. Computational experiments have been conducted on a Sandy Bridge dual-socket Linux machine with eight 2.6 GHz cores on each socket and 64GB of RAM.

\subsection{Material design instances} \label{sec:mdi}
The material design problem considered in this section is to find the optimal topological polymer structure for given desired rheological properties. The main approach consists of two phases; the first phase is developing a neural network that mimics the behavior of the forward simulation, bob-rheology\footnote{https://sourceforge.net/projects/bob-rheology/}, that predicts the rheological properties with a given topological structure \cite{Read11}. The second phase is solving an inverse optimization problem for a given target rheological property description with the trained neural network. The main goal of the second phase is to find many good configurations rather than find a good configuration. Therefore, the multi-start approach is adopted.   

The best performing neural network has 5 dense layers with the structure of $7 \times 64 \times 128 \times 256 \times 534$ neurons. The first 7 neurons are inputs for the design parameters and the outputs of the last 534 neurons depicts the rheological properties. All intermediate neurons have a rectified linear activation unit for their output. The inverse optimization problem has two types of constraints. One is upper and lower bounds of input parameters. The other constraint is a ratio constraint of two input parameters such that the sum of the two input elements should be equal to 1.  

\begin{figure}[h]
\begin{center}
\includegraphics[width=0.4\textwidth]{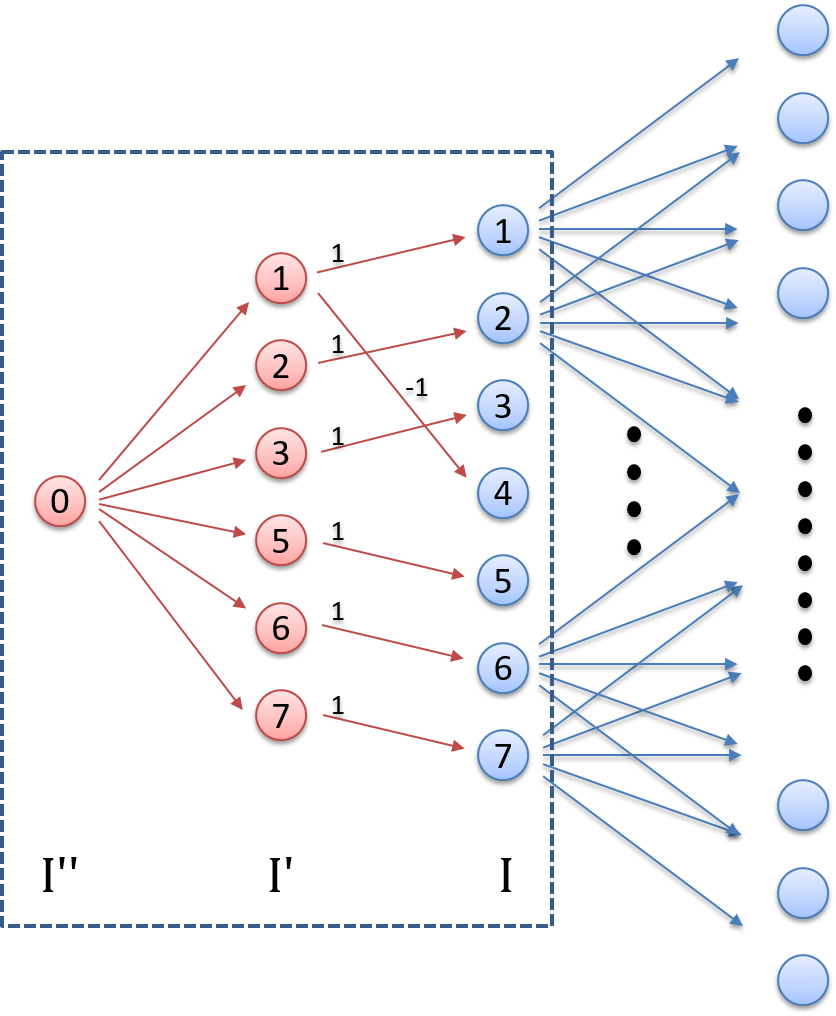}
\end{center}
\caption{A diagram of the modified neural network: The neurons and arcs in blue are from the trained network and the rest are introduced to solve the inverse problem within the Tensorflow framework. Note that the only trainable parameters are biases in layer $I'$. The rest of weights and biases are fixed.} \label{fig:mnn}
\end{figure}

The projected gradient method is implemented within Tensorflow with the following modification. The main modification is introducing two additional layers prior to the original input layer in order to describe the ratio constraint and describe the input parameters as variables. Let $I$ be the input layer of the original trained neural network. Let $I'$ be the additional layer connected the input layer $I$ and $I''$ be another layer adjacent to the layer $I'$. We create one fewer neurons in layer $I'$ such that we duplicate all neurons in layer $I$ except one neuron in the ratio constraint. In layer $I''$, we create only one neuron. We set the biases of layer $I$ to 0 except that the bias of the neuron without a replication in layer $I'$ is set to 1. The weights from layer $I'$ to layer $I$ are set to 1 if both neurons have a same neuron number. The weight from the ratio neuron in layer $I'$ to the ratio neuron in layer $I$ is set to -1. All other weights from layer $I'$ to layer $I$ are set to 0. The weights from neurons in layer $I''$ to neurons in layer $I'$ are set to 0. The neuron in layer $I''$ is the input layer of the modified neural network. All neurons in layer $I''$, $I'$ and $I$ are a linear system without any activation function. The rest of neural network remains same as the original trained neural network. At last, we set only the biases of neurons in layer $I'$ trainable and set all other weights and biases not trainable. Figure \ref{fig:mnn} depicts the modified neural network. Neurons and weight parameters in the dotted box are modified compared to the original trained neural network. 

The bound constraints can be incorporated with a simple projection procedure. First, we optimize the biases in layer $I'$ of the modified neural network with a Tensorflow optimizer. With a fixed number of iterations, we examine the solution if it is within the bounds. If it is outside of the bounds, we project it to the feasible bounds. Since it is a simple bound, the projection is equivalent to move the value to the closest bound. 

The projected gradient algorithm has two termination criteria; one condition is when the next solution is same as the previous solution, which means either the gradient of the current solution is equal to zero or the descent direction is infeasible respect to the bounds. The other is stalling behaviors such that the Tensorflow cannot find a better solution with multiple attempts. 

\subsubsection{Experimental design}
The projected gradient approach and the proposed outer-approximation guided approach have been tested on 100 instances. All instances share the same trained neural network discussed in Section \ref{sec:mdi} with a given target and the mean squared error as the loss function. The only difference between instances is their starting points, which have been collected through examination of the training data set. For each training set, it can be easily to measure the loss between the corresponding output and the given target. The following results are a summary of the 100 optimization runs. 

Table \ref{tab:optpara} summarizes the various optimization hyper-parameter configurations for the experiment. For the projected gradient method, Adam and RMSprop have been tested as the Tensorflow optimizer. For these specific instances, RMSprop outperforms the other optimizer. For the stalling termination criteria, the number of no improvement steps is set to 20, which is tuned to a balanced setting between the solution quality and the computational time. In order to improve the solution time of the projected gradient method, the projection operation is conducted at every 16 (epochs) steps of Tensorflow optimization instead of projection at each step. Table \ref{tab:optpara} (b) summarizes various step sizes that have been tested for the projected gradient method. Table \ref{tab:optpara} (a) summarizes various step size parameter $\gamma^s$ and neighborhood size parameter $\gamma^c$ of problem (DLOA) for the proposed outer-approximation guided method. All other hyper-parameters for Algorithm \ref{alg:ogo} are summarized in Table \ref{tab:optcfg}.
   
\begin{table}
\parbox[t]{.45\linewidth}{
\centering
\begin{tabular}[t]{l|c|c}
\multicolumn{1}{c|}{Label} & Step size ($\gamma^s$) & Neig. size ($\gamma^c$) \\
\hline \hline
OGO-0.01-0.1 & $10^{-2}$ & $10^{-1}$ \\
OGO-0.01-0.05 & $10^{-2}$ & $5 \times 10^{-2}$ \\
OGO-0.01-0.01 & $10^{-2}$ & $10^{-2}$ \\
OGO-0.1-0.5 & $10^{-1}$ & $5 \times 10^{-1}$ \\
OGO-0.1-0.05 & $10^{-1}$ & $5 \times 10^{-2}$ \\
\hline 
\end{tabular}
} \hfill
\parbox[t]{.45\linewidth}{
\centering
\begin{tabular}[t]{l|c}
\multicolumn{1}{c|}{Label} & Step size \\
\hline \hline
PGO-0.01  & $10^{-2}$  \\
PGO-0.001 & $10^{-3}$  \\
PGO-0.0001 & $10^{-4}$ \\
\hline 
\end{tabular}
}

\vspace{0.3cm}
\parbox{.45\linewidth}{
\centering
{(a) Outer approximation guided method}
} \hfill
\parbox{.45\linewidth}{
\centering
{(b) Projected gradient method}
}
\caption{Parameter settings for computational experiments} \label{tab:optpara}
\end{table}

\begin{table}[h] 
\begin{center}
\begin{tabular}{c|c|c|c|c|c}
$\epsilon$ & $\gamma^s_{\min}$ & $\gamma^s_{\max}$ & $\rho^c$ & $\rho^e$ & $N$\\
\hline
$1E-5$ & $\gamma^c$ & $\epsilon \times \sqrt{n}$ & $0.9$ & $1.5$ & $1E3$
\end{tabular} 
\caption{Optimization hyper-parameters for Algorithm \ref{alg:ogo}. Parameter $n$ denotes the number of input variables.} \label{tab:optcfg}
\end{center}
\end{table}
The percent-gap-closed to the best known solution are employed as the solution quality metrics. The percent-gap-closed metrics $\rho^k_j$ for instance $j$ and approach $k$ is defined as follows; 
\begin{equation}
\rho^k_j = \frac{v^0_j - v^k_j}{v^0_j - v^*_j}, \label{eq:pgc}
\end{equation} 
where $v^0_j$ and $v^*_j$ denote the initial objective value and the best objective value among all approaches for instance $j$. $v^k_j$ is the objective value of approach $k$ for instance $j$. The denominator of Equation \eqref{eq:pgc} computes the best known improvement for instance $j$ and the numerator computes the improvement for the corresponding approach. The metrics measures how close to the best known solution the solution from each method is.

\subsubsection{Overall solution time and quality}
\begin{figure}[h]
\begin{center}
\includegraphics[width=0.6\textwidth]{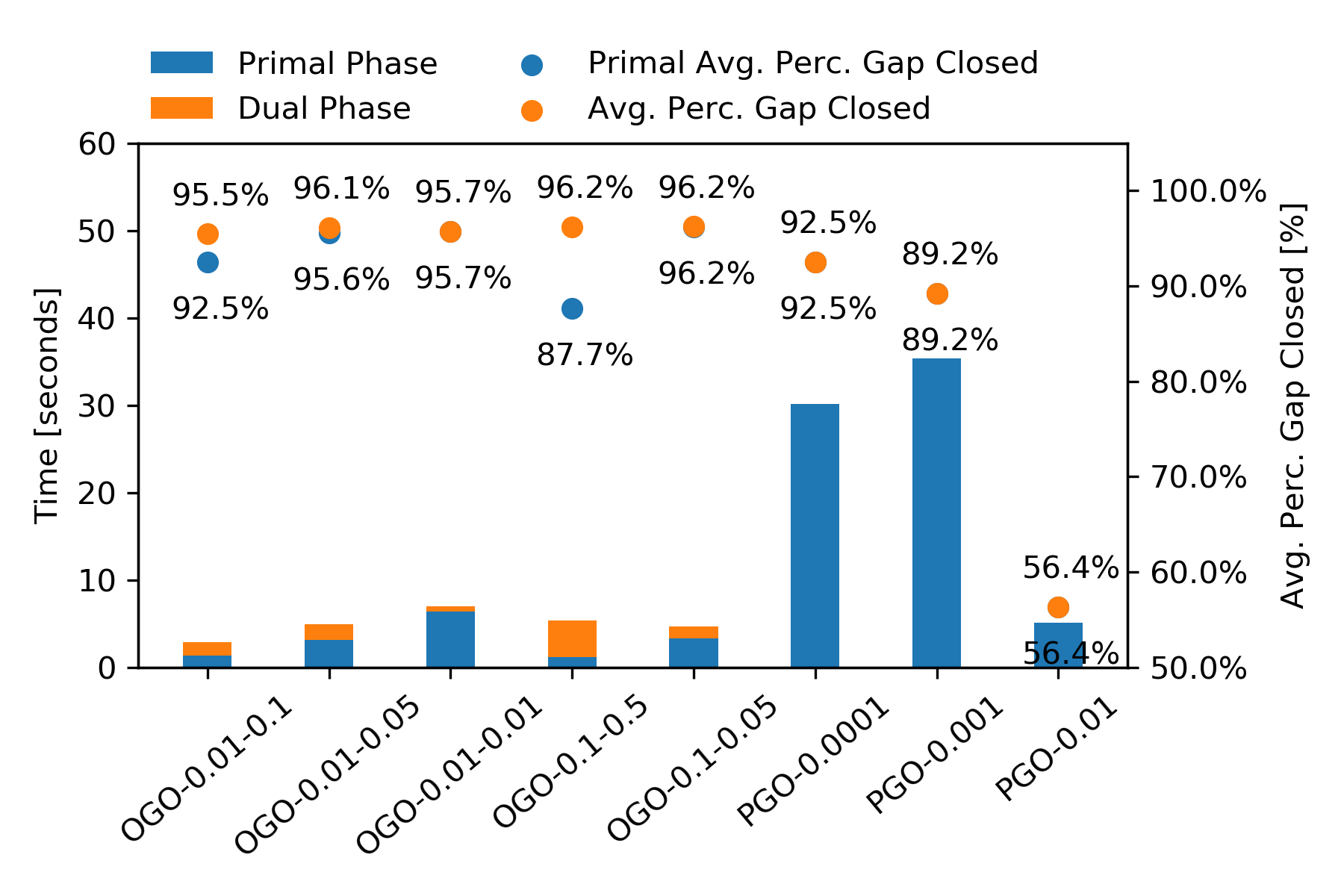}
\end{center}
\caption{The average solution time and average percent-gap-closed metrics. The x-axis represents optimization approaches with various hyper-parameter settings. The bar charts are the average solution time of 100 instances. The dot charts show the average percent-gap-closed across 100 optimization runs at termination. The blue bars and dots are the results for the primal phase and the orange bars and dots are corresponding to the dual phase.} \label{fig:avg_t_p}
\end{figure}

Figure \ref{fig:avg_t_p} summarizes the average solution time and the average percent-gap-closed metrics across 100 instances in the primal and dual phases. The proposed outer-approximation algorithm can alter the status between primal and dual phases. However, in the computational summary, once the algorithm enters the dual phase, the subsequent procedure is treated as the dual phase. The projected gradient method has only the primal phase. 

For the projected gradient method, as the step size decreases, the solution time increases and the solution quality is improved. There is no such clear trend for the proposed algorithm in terms of the solution quality and time. The computational result implies that the time spent in the primal phase is dependent on the ratio of the step size $\gamma^s$ and the neighborhood size $\gamma^c$. If they are similar, the algorithm tends to spend more time in the primal phase. 

The proposed outer-approximation guided method outperforms the projected gradient method in terms of the solution time and quality. The projected gradient method with a small step size (PGO-0.01) can solve the problems within  comparable solution times as the proposed algorithm but it cannot achieve similar solution qualities. The solution quality from the proposed approach is superior to one from the projected gradient method. 

The average improvement on the computational time ranges between 30 and 35 seconds, which can be considered as insignificant. However, in the material design context, it is required to solve many cases up to hundreds or thousands in order to produce many leading candidates and overcome the local optimality issues. In that regard, the percentage improvement, 1 - the average time for the proposed algorithm / the average time for the projected gradient method, is a more meaningful measure, which ranges between 77\% and 92\%. It implies that the proposed algorithm can solve the same number of problems with less than 23\% of the computational efforts.    

\subsubsection{Solution progress profile}
\begin{figure}[h]
\begin{center}
\includegraphics[width=0.6\textwidth]{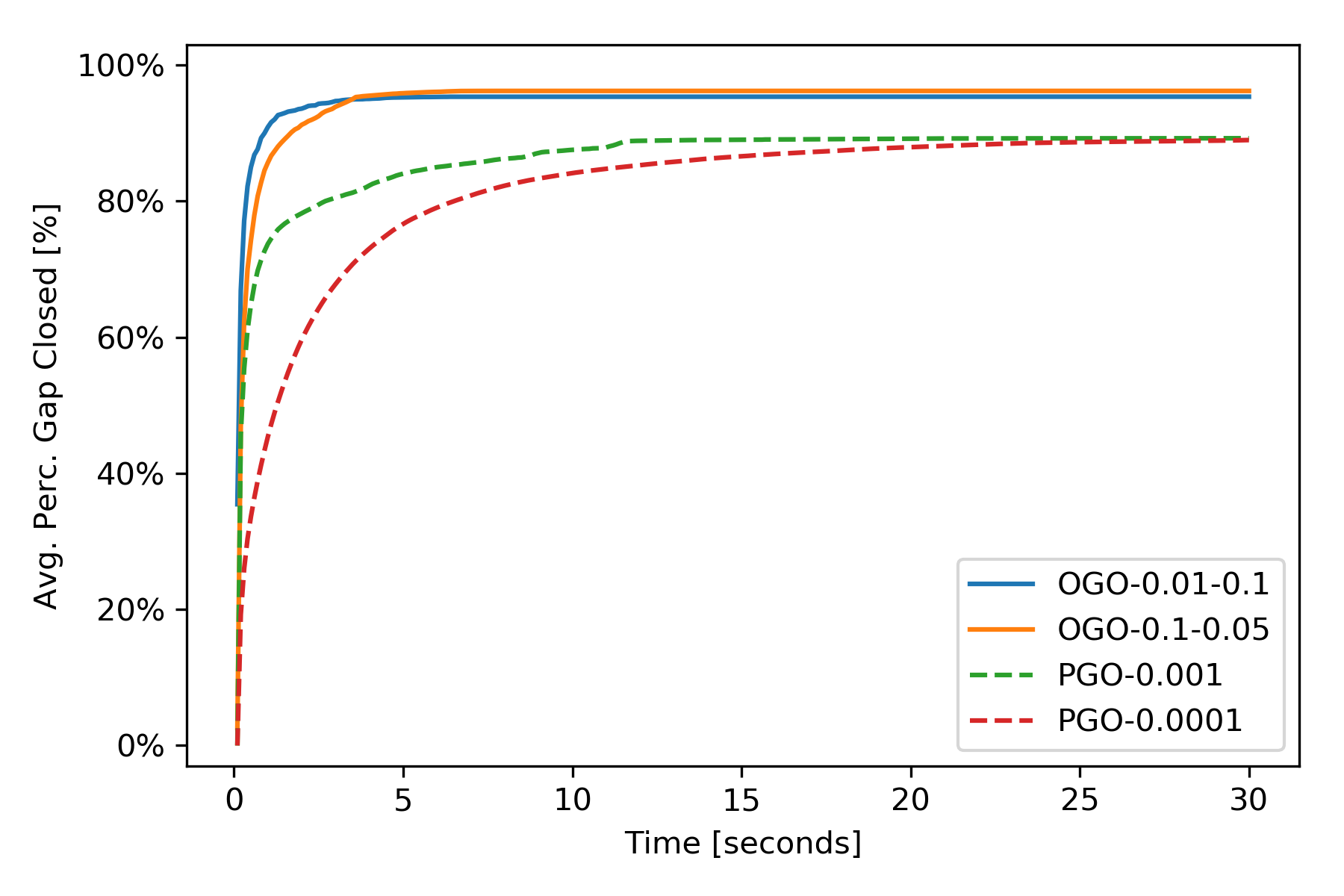}
\end{center}
\caption{The progress profile of the average percentage gap closed metrics. The x-axis represents time spent in seconds. The y-axis is the average percentage gap closed.} \label{fig:ogo_vs_pgo}
\end{figure}

Figure \ref{fig:ogo_vs_pgo} shows the progress profile of the solution for four approaches (OGO-0.01-0.1, OGO-0.1-0.05, PGO-0.001, PGO-0.0001). The graphs in Figure \ref{fig:ogo_vs_pgo} show the average solution quality over time. It clearly shows that the proposed algorithm finds a better solution quickly. Both outer-approximation approaches achieve over 90\% solution quality within 5 seconds whereas the projected gradient methods around 80\% of the best known solution.

\subsection{Randomly generated instances}
The random instances are generated as follows; The process starts with a given neural network structure. The first step is randomly generating weights and biases for the given neural network structure. The initial randomization uses a normal distribution. Randomly generated neural networks tend to have a huge spread at outputs even with controlled inputs ranging between 0 and 1. The second step is to scale the output of the neural network. In the second step, the neural network is re-trained with many scaled outputs, which can be generated from the neural network from the first step. Many pairs of input and output data can be generated along with the neural network from the first step. After the output data is processed to range between 0 and 1, the processed data is used to retrain the neural network. 

The two network architectures in Table \ref{tab:brnn} are considered as the base structures. The targets are randomly generated and are not part of the scaled training set. The starting points are randomly chosen with a standard uniform distribution. Only bound constraints for inputs are considered.
\begin{table}[h]
\begin{center}
\begin{tabular}{r|l}
\hline 
S1 & 100,256,128,64,128,64,32,8 \\
\hline
S2 & 256,128,64,128,64 \\
\hline
\end{tabular}
\caption{Base neural network structures for random instances} \label{tab:brnn}
\end{center}
\end{table} 

For the outer approximation guided approach, the hyper-parameter settings in Table \ref{tab:rndoptcfg} are tested. In this experiment, the various optimality gap criteria are considered. For the projected gradient approach, two step sizes are considered. PGO\_0.001 and PGO\_0.0001 refer to the projected gradient methods with step size $0.001$ and $0.0001$, respectively.  

\begin{table}[h] 
\begin{center}
\begin{tabular}{l|c|c|c|c|c|c|c|c}
\multicolumn{1}{c|}{Label} & $\epsilon$ & $\gamma^s$ & $\gamma^c$ & $\gamma^s_{\min}$ & $\gamma^s_{\max}$ & $\rho^c$ & $\rho^e$ & $N$\\
\hline
OGO\_0.001 & $1E-3$ & $0.01$ & $0.5$ & $\gamma^c$ & $\epsilon \times \sqrt{n}$ & $0.9$ & $1.5$ & $1E3$ \\
\hline
OGO\_0.0001 & $1E-4$ & $0.01$ & $0.5$ & $\gamma^c$ & $\epsilon \times \sqrt{n}$ & $0.9$ & $1.5$ & $1E3$
\end{tabular} 
\caption{Optimization hyper-parameters of Algorithm \ref{alg:ogo} for random instances. Parameter $n$ denotes the number of input variables.} \label{tab:rndoptcfg}
\end{center}
\end{table}

Because of random starting points, the percentage-gap-closed metrics does not distinguish the performance difference of approaches. The most of final solutions are within a range of $10^{-2}$ and initial solutions are typically far from the local optimal solution because of random starting points. These two factors make the denominator of the metrics large and the differences in the numerator small. In this experiment, the absolute difference to the best known solution is used instead of the percentage-gap-closed. The metrics measures the absolute difference between the best known solution and the solution of each instance and approach.        

\begin{figure}[h]
\begin{center}
\begin{tabular}{cc}
\includegraphics[width=0.4\textwidth]{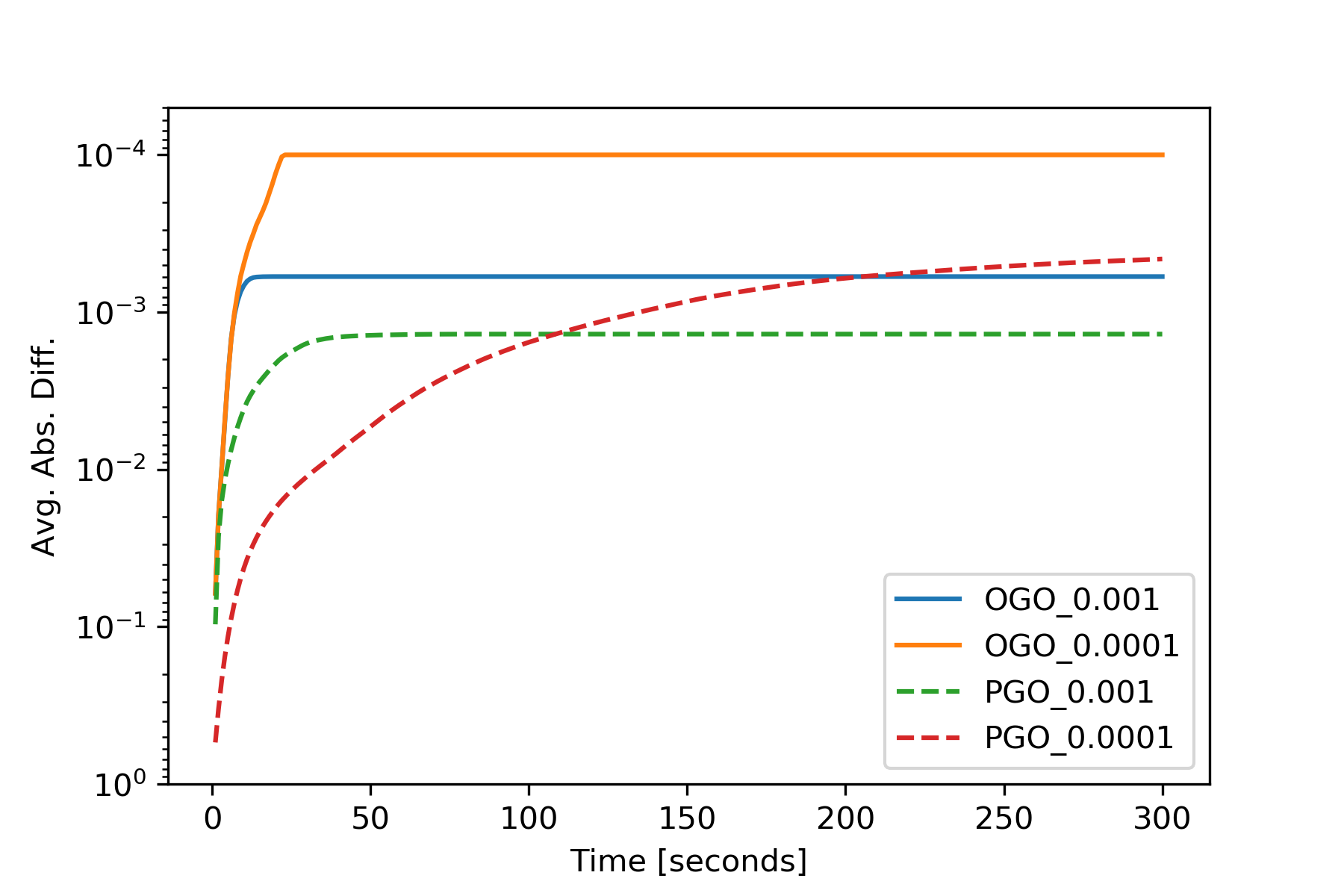} & \includegraphics[width=0.4\textwidth]{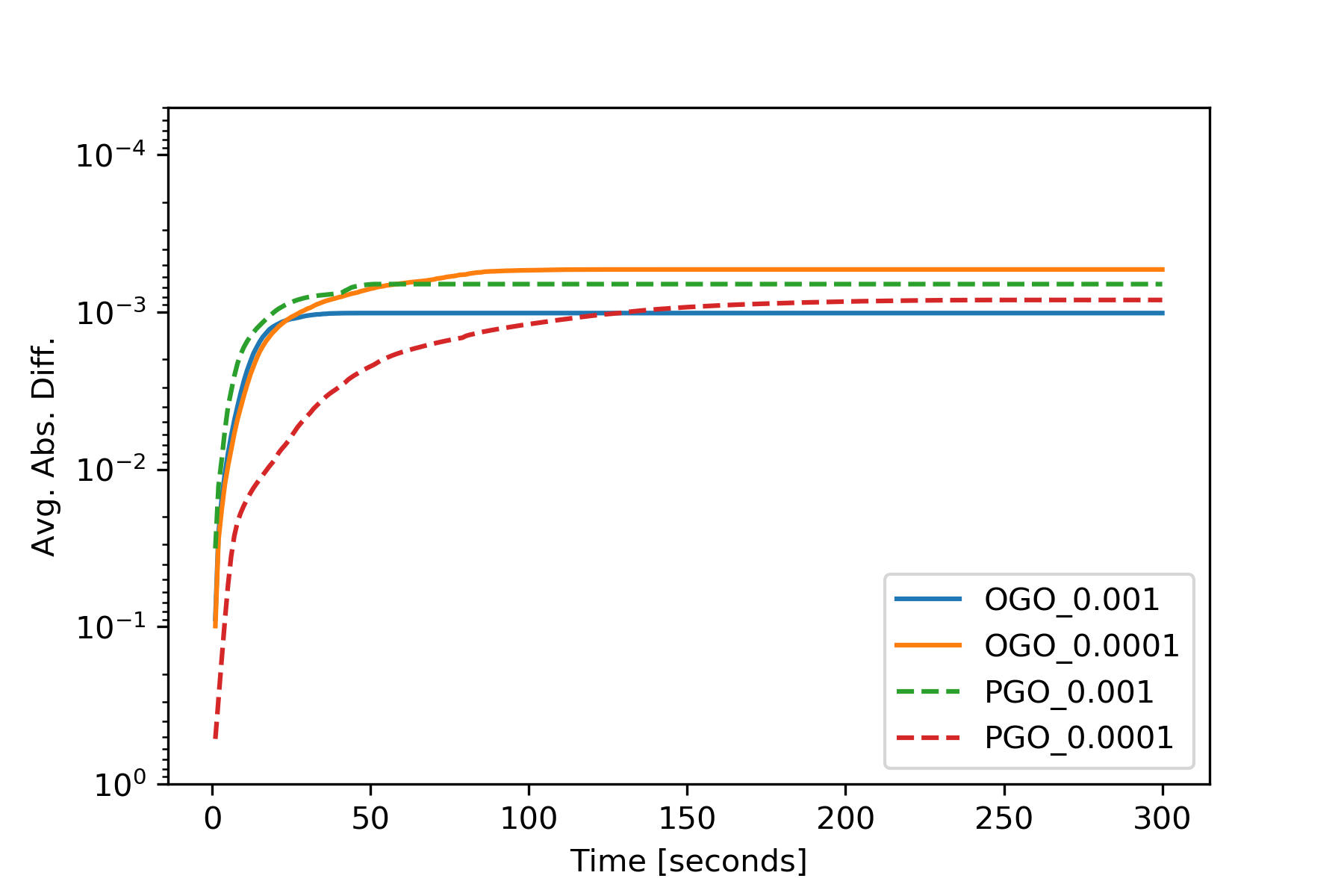} \\
(a) S1 & (b) S2
\end{tabular}
\end{center}
\caption{The progress profile of the average absolute difference. The x-axis represents time spent in seconds. The y-axis in a logarithmic scale is the average absolute difference to the best known solution.} \label{fig:rnd_prog}
\end{figure}

Figure \ref{fig:rnd_prog} shows the representative progress profiles of experiments. The y-axis in a logarithmic scale is the average absolute difference between the solution over time and the best known solution. Each line denotes the different approach. Note that the orange line is truncated at $10^{-4}$ for display purpose.

The outer approximation guided approach finds a good quality solution quickly for neural network structure S1 instances compared to the projected gradient method. For neural network structure S2 instances, the projected gradient method with step size of $0.001$ shows better performance than two outer-approximation guided approaches at the earlier stage of the progress. After around 50 seconds, the outer-approximation guided approach with optimality tolerance of $10^{-4}$ achieves a better average solution quality. Note that the optimality gap $\epsilon$ is respect to a local optimal solution and not for the global optimality. Therefore, OGO\_0.0001 can have solutions with more than $0.0001$ absolute difference. It is hard to say that the proposed algorithm always outperforms the projected gradient methods. The numerical results show that the proposed algorithm is generally superior to the projected gradient method discussed. 

\section{Conclusion}
This paper discusses constrained neural network inverse optimization problems that find the optimal set of input parameters for a desired output and proposes an outer-approximation guided method, especially when the inverse problem has additional constraints on input parameters. The proposed method is devised by exploiting the characteristics of the local optimal solution for neural network inverse optimization problems with rectified activation units. The proposed algorithm consists of primal and dual phases. In the primal phase, the algorithm incorporates neighbor gradients through outer approximations of neighbors to expedite the convergence to a good quality solution. In the dual phase, the algorithm exploits the convex structure of the local optimal solution to improve the speed of convergence. In addition, the paper proposes a method to generate feasibility cuts without solving an explicit dual problem. The superiority of the proposed algorithm is demonstrated with computational results compared to a projected gradient method for a material design problem and randomly generated instances.

\bibliographystyle{apalike}
\bibliography{dnngradopt}

\begin{thebibliography}{}

\bibitem[Anderson et~al., 2019]{Anderson19}
Anderson, R., Huchette, J., Tjandraatmadja, C., and Vielma, J.~P. (2019).
\newblock Strong mixed-integer programming formulations for trained neural
  networks.
\newblock In Lodi, A. and Nagarajan, V., editors, {\em Integer Programming and
  Combinatorial Optimization}, pages 27--42, Cham. Springer International
  Publishing.

\bibitem[Ben-Tal and Nemirovski, 2005]{BenTal05}
Ben-Tal, A. and Nemirovski, A. (2005).
\newblock Non-euclidean restricted memory level method for large-scale convex
  optimization.
\newblock {\em Mathematical Programming}, 102(3):407--456.

\bibitem[Benders, 1962]{Benders62}
Benders, J.~F. (1962).
\newblock Partitioning procedures for solving mixed-variables programming
  problems.
\newblock {\em Numerische Mathematik}, 4(1):238--252.

\bibitem[Bertsekas, 1999]{Bertsekas99}
Bertsekas, D. (1999).
\newblock {\em Nonlinear Programming}.
\newblock Athena Scientific.

\bibitem[Chen and Gu, 2020]{Chen20}
Chen, C.~T. and Gu, G.~X. (2020).
\newblock Generative deep neural networks for inverse materials design using
  backpropagation and active learning.
\newblock {\em Advanced Science}, page~10.

\bibitem[Cortés et~al., 2009]{CORTES09}
Cortés, O., Urquiza, G., and Hernández, J. (2009).
\newblock Optimization of operating conditions for compressor performance by
  means of neural network inverse.
\newblock {\em Applied Energy}, 86(11):2487 -- 2493.

\bibitem[Duran and Grossmann, 1986]{Duran86}
Duran, M.~A. and Grossmann, I.~E. (1986).
\newblock An outer-approximation algorithm for a class of mixed-integer
  nonlinear programs.
\newblock {\em Mathematical Programming}, 36(3):307--339.

\bibitem[Eaves and Zangwill, 1971]{Eaves71}
Eaves, B. and Zangwill, W. (1971).
\newblock Generalized cutting plane algorithms.
\newblock {\em SIAM Journal on Control}, 9(4):529--542.

\bibitem[Fischetti and Jo, 2018]{Fischetti18}
Fischetti, M. and Jo, J. (2018).
\newblock Deep neural networks and mixed integer linear optimization.
\newblock {\em Constraints}, 23(3):296--309.

\bibitem[Geoffrion, 1970]{Geoffrion70}
Geoffrion, A.~M. (1970).
\newblock Elements of large-scale mathematical programming part i: Concepts.
\newblock {\em Management Science}, 16(11):652--675.

\bibitem[Geoffrion, 1972]{Geoffrion72}
Geoffrion, A.~M. (1972).
\newblock Generalized benders decomposition.
\newblock {\em Journal of Optimization Theory and Applications},
  10(4):237--260.

\bibitem[Hochreiter and Schmidhuber, 1997]{Hochreiter97}
Hochreiter, S. and Schmidhuber, J. (1997).
\newblock Long short-term memory.
\newblock {\em Neural Computation}, 9(8):1735--1780.

\bibitem[Hogan, 1973]{Hogan73}
Hogan, W.~W. (1973).
\newblock Applications of a general convergence theory for outer approximation
  algorithms.
\newblock {\em Mathematical Programming}, 5(1):151--168.

\bibitem[Hornik et~al., 1990]{HORNIK90}
Hornik, K., Stinchcombe, M., and White, H. (1990).
\newblock Universal approximation of an unknown mapping and its derivatives
  using multilayer feedforward networks.
\newblock {\em Neural Networks}, 3(5):551 -- 560.

\bibitem[Kelley, 1960]{Kelley60}
Kelley, Jr., J. (1960).
\newblock The cutting-plane method for solving convex programs.
\newblock {\em Journal of the Society for Industrial and Applied Mathematics},
  8(4):703--712.

\bibitem[Krizhevsky et~al., 2012]{Krizhevsky12}
Krizhevsky, A., Sutskever, I., and Hinton, G.~E. (2012).
\newblock Imagenet classification with deep convolutional neural networks.
\newblock In Pereira, F., Burges, C. J.~C., Bottou, L., and Weinberger, K.~Q.,
  editors, {\em Advances in Neural Information Processing Systems 25}, pages
  1097--1105. Curran Associates, Inc.

\bibitem[Linden and Kindermann, 1989]{Linden89}
Linden, A.~T. and Kindermann, J. (1989).
\newblock Inversion of multilayer nets.
\newblock {\em International 1989 Joint Conference on Neural Networks}, pages
  425--430 vol.2.

\bibitem[O'Malley et~al., 2019]{omalley19}
O'Malley, D., Golden, J.~K., and Vesselinov, V.~V. (2019).
\newblock Learning to regularize with a variational autoencoder for hydrologic
  inverse analysis.

\bibitem[Peters and Schaal, 2008]{Peters08}
Peters, J. and Schaal, S. (2008).
\newblock {Natural Actor-Critic}.
\newblock {\em Neurocomputing}.

\bibitem[Read et~al., 2011]{Read11}
Read, D.~J., Auhl, D., Das, C., den Doelder, J., Kapnistos, M., Vittorias, I.,
  and McLeish, T. C.~B. (2011).
\newblock Linking models of polymerization and dynamics to predict branched
  polymer structure and flow.
\newblock {\em Science}, 333(6051):1871--1874.

\bibitem[Rezaee and Dadkhah, 2019]{Rezaee19}
Rezaee, M.~J. and Dadkhah, M. (2019).
\newblock A hybrid approach based on inverse neural network to determine
  optimal level of energy consumption in electrical power generation.
\newblock {\em Computers \& Industrial Engineering}, 134:52--63.

\bibitem[Schmidhuber, 2015]{SCHMIDHUBER15}
Schmidhuber, J. (2015).
\newblock Deep learning in neural networks: An overview.
\newblock {\em Neural Networks}, 61:85 -- 117.

\bibitem[Silver et~al., 2017]{Silver17}
Silver, D., Schrittwieser, J., Simonyan, K., Antonoglou, I., Huang, A., Guez,
  A., Hubert, T., Baker, L., Lai, M., Bolton, A., Chen, Y., Lillicrap, T., Hui,
  F., Sifre, L., {Van Den Driessche}, G., Graepel, T., and Hassabis, D. (2017).
\newblock {Mastering the game of Go without human knowledge}.
\newblock {\em Nature}.

\bibitem[Szegedy et~al., 2014]{Szegedy14}
Szegedy, C., Zaremba, W., Sutskever, I., Bruna, J., Erhan, D., Goodfellow, I.,
  and Fergus, R. (2014).
\newblock Intriguing properties of neural networks.
\newblock In {\em International Conference on Learning Representations}.

\end{thebibliography}

\end{document}